\def\ls{\lesssim}
\def\fz{\infty}
\renewcommand{\r}{\right}
\newcommand{\lf}{\left}
\def\ls{\lesssim}
\def\supp{{\mathop\mathrm{\,supp\,}}}
\def\rr{{\mathbb R}}
\def\rn{{{\rr}^n}}
\def\nn{{\mathbb N}}
\newcommand{\wz}{\widetilde}
\def\epz{\varepsilon}
\def\fai{\varphi}
\def\gz{{\gamma}}
\def\wz{\widetilde}
\def\ls{\lesssim}
\def\boz{\Omega}
\def\divz{{{\mathop\mathrm {div}}}}
\newtheorem{theorem}{Theorem}[section]
\newtheorem{lemma}[theorem]{Lemma}
\newtheorem{corollary}[theorem]{Corollary}
\newtheorem{proposition}[theorem]{Proposition}
\theoremstyle{definition}
\newtheorem{remark}[theorem]{Remark}
\newtheorem{definition}[theorem]{Definition}
\def\supp{{\mathop\mathrm{\,supp\,}}}
\def\loc{{\mathop\mathrm{loc}}}
\numberwithin{equation}{section}
\begin{document}

\title{\Large\bf Some remarks on  Riesz transform on exterior Lipschitz domains
\footnotetext{\hspace{-0.35cm} 2020 {\it Mathematics Subject
Classification}. {Primary 35J25; Secondary 35B65, 42B35, 42B37.}
\endgraf{\it Key words and phrases}. Riesz transform, second-order elliptic operator, Dirichlet boundary condition, exterior Lipschitz domain.
\endgraf R. Jiang was partially supported by NNSF of China (12471094 \& 11922114). S. Yang is partially supported by the National Natural Science Foundation of China
(Grant No. 12431006), the Key Project of Gansu Provincial National
Science Foundation (Grant No. 23JRRA1022), the Fundamental Research Funds for the Central Universities (Grant No. lzujbky-2021-ey18) and the Innovative Groups of Basic Research in Gansu Province (Grant No. 22JR5RA391).}}

\author{Renjin Jiang and Sibei Yang}
\date{}
\maketitle

\vspace{-0.8cm}

\begin{center}
\begin{minipage}{12.5cm}\small
{{\bf Abstract.} Let $n\ge2$ and $\mathcal{L}=-\mathrm{div}(A\nabla\cdot)$
be an elliptic operator on $\mathbb{R}^n$. Given an exterior Lipschitz domain $\Omega$, let $\mathcal{L}_D$ be the elliptic operator $\mathcal{L}$
on $\Omega$ subject to the Dirichlet  boundary condition.
Previously it was known that the Riesz operator $\nabla \mathcal{L}_D^{-1/2}$
is not bounded for $p>2$ and $p\ge n$, even if $\mathcal{L}=-\Delta$ being the Laplace operator and $\Omega$ being a domain outside a ball.
Suppose that $A$ are CMO coefficients or VMO coefficients satisfying certain perturbation property, and $\partial\Omega$ is $C^1$, we prove that for $p>2$ and $p\in [n,\infty)$, it holds
$$
\inf_{\phi\in\mathcal{K}_p(\mathcal{L}_D^{1/2})}\left\|\nabla (f-\phi)\right\|_{L^p(\Omega)}\sim \inf_{\phi\in\mathcal{K}_p(\mathcal{L}_D^{1/2})}\left\|\mathcal{L}^{1/2}_D (f-\phi)\right\|_{L^p(\Omega)}
$$
for $f\in \dot{W}^{1,p}_0(\Omega)$. Here $\mathcal{K}_p(\mathcal{L}_D^{1/2})$ is the kernel of $\mathcal{L}_D^{1/2}$ in $\dot{W}^{1,p}_0(\Omega)$,
which coincides with $\tilde{\mathcal{A}}^p_0(\Omega):=\{f\in \dot{W}^{1,p}_0(\Omega):\,\mathcal{L}_Df=0\}$ and is a one dimensional subspace. 
As an application, we provide a substitution of  $L^p$-boundedness of $\sqrt{t}\nabla e^{-t\mathcal{L}_D}$ which is uniform in $t$ for $p\ge n$ and $p>2$. }
\end{minipage}
\end{center}

\vspace{0.1cm}

\section{Introduction and main results\label{s1}}

In this paper, motivated by the recent work \cite{hs09,jl22,kvz16} on the Riesz transform on exterior Lipschitz domains,
we continue to study the boundedness of the Riesz transform,
associated with second-order divergence form elliptic operators on the exterior Lipschitz domain $\boz$ having the
Dirichlet boundary condition, on $L^p(\boz)$ with $p\in(2,\fz)$.

Let $n\ge2$ and $\boz\subset\rn$ be an exterior Lipschitz domain, that is, the boundary of $\boz$, denoted by
$\partial\boz$, is a finite union of parts of rotated graphs of Lipschitz functions, $\boz$ is connected,
and $\boz^c:=\rn\setminus\boz$ is bounded. Furthermore, assume that $A\in L^\infty(\rn)$ is a real-valued
and symmetric matrix that satisfies the uniformly elliptic condition, that is,
there exists a constant $\mu_0\in(0,1]$ such that, for any $\xi\in\rn$ and $x\in\rn$,
\begin{equation*}
\mu_0|\xi|^2\le (A(x)\xi,\xi)\le\mu_0^{-1}|\xi|^2,
\end{equation*}
where $(\cdot,\cdot)$ denotes the inner product in $\rn$.

Denote by $\mathcal{L}$ the  operator $-\mathrm{div}(A\nabla\cdot)$ on
$\rn$, and by $\mathcal{L}_D$  the operator $-\mathrm{div}(A\nabla\cdot)$ on
$\boz$ subject to the Dirichlet boundary condition (see, for instance,
\cite[Section 4.1]{o05} for the detailed definitions of $\mathcal{L}$, $\mathcal{L}_D$).
When $A:=I_{n\times n}$ (the unit matrix), we simply denote these operators respectively by $\Delta$ and $\Delta_D$.
Moreover, let $O\subset\rn$ be a bounded Lipschitz domain. Denote by $\mathcal{L}_{D,O}$  the operator $-\mathrm{div}(A\nabla\cdot)$ on $O$ subject to the Dirichlet boundary condition.

Let $p\in[1,\fz]$. Denote by $\dot{W}^{1,p}(\rn)$ the \emph{homogeneous Sobolev space} on $\rn$, namely the completion
of $C^\fz_{\rm c}(\rn)$ under the \emph{semi-norm} $\|\nabla \cdot\|_{L^p(\rn)}$. Meanwhile, the \emph{homogeneous Sobolev spaces}
$\dot{W}_0^{1,p}(\boz)$, $\dot{W}^{1,p}(\boz)$, on $\boz$ is defined as the completion
of $C^\fz_{\rm c}(\boz)$, $C^\fz_{\rm c}(\rn)$, under the \emph{semi-norm} $\|\nabla \cdot\|_{L^p(\boz)}$ and $\|\nabla \cdot\|_{L^p(\rn)}$, respectively.
Here and thereafter, $C^\fz_{\rm c}(\rn)$ and $C^{\fz}_{\mathrm{c}}(\boz)$ denote the set of all \emph{infinitely differentiable
functions} with compact support in  $\rn$ and  $\boz$ respectively.
Moreover, for any $q\in(1,\fz)$, denote by $\dot{W}^{-1,q}(\rn)$, $\dot{W}^{-1,q}(\Omega)$ $\dot{W}_0^{-1,q}(\boz)$, respectively,
the dual spaces of $\dot{W}^{1,q'}(\rn)$, $\dot{W}^{1,q'}(\boz)$, $\dot{W}_0^{1,q'}(\boz)$,  where $q':=q/(q-1)$.

Let $O\subset\rn$ be a bounded Lipschitz domain. Denote by $W^{1,p}(O)$ the \emph{Sobolev space on $O$}
equipped with the \emph{norm}
$$\|f\|_{W^{1,p}(\boz)}:=\|f\|_{L^p(O)}+\|\,|\nabla f|\,\|_{L^p(O)},$$
where $\nabla f$ denotes the \emph{distributional gradient} of $f$. Furthermore, $W^{1,p}_{0}(O)$
is defined to be the \emph{closure} of $C^{\fz}_{\mathrm{c}}(O)$ in $W^{1,p}(O)$. Meanwhile,
for any $q\in(1,\fz)$, denote by $W^{-1,q}(O)$ and $W_0^{-1,q}(O)$, respectively,
the dual spaces of $W^{1,q'}(O)$ and $W_0^{1,q'}(O)$.

It is well known that the boundedness of the Riesz transform associated with some differential operators
on various function spaces has important applications in harmonic analysis and partial differential
equations and has aroused great interests in recent years (see, for instance,
\cite{acdh04,at01,cd99,cjks20,hs09,j21,jl20,kvz16,sh05a,sy10}).
In particular, let $O$ be a bounded Lipschitz domain of $\rn$. The sharp boundedness of the Riesz transform
$\nabla\mathcal{L}_{D,O}^{-1/2}$ associated with the operator $\mathcal{L}_{D,O}$ having the Dirichlet boundary condition
on the Lebesgue space $L^p(O)$ was established by Shen \cite{sh05a}.

Compared with the boundedness of the Riesz transform associated with differential operators on bounded Lipschitz domains,
there are relatively few literatures for the Riesz transform associated with differential operators on exterior Lipschitz domains.
Since the heat kernel generated by $\mathcal{L}_D$  satisfies the Gaussian upper bound estimate,
it follows from the results of Sikora \cite{s04} (see also \cite{cd99}) that the Riesz transforms
$\nabla\mathcal{L}^{-1/2}_D$ is always bounded on $L^p(\boz)$ for $p\in(1,2]$.
By studying weighted operators in the one dimension, Hassell and Sikora \cite{hs09} discovered that the Riesz
transform $\nabla\Delta_D^{-1/2}$ on the exterior of the unit ball is \emph{not} bounded on $L^p$ for $p\in(2,\fz)$ if $n=2$,
and $p\in[n,\fz)$ if $n\ge3$; see also \cite{lsz12} for the case $n=3$. Moreover, Killip, Visan and Zhang \cite{kvz16} proved that the Riesz
transform $\nabla\Delta_D^{-1/2}$ on the exterior of a smooth convex obstacle in $\rn$ ($n\ge3$)
is bounded for $p\in(1,n)$. Very recently,  characterizations for the boundedness
of the Riesz transform $\nabla\mathcal{L}_{D}^{-1/2}$ on $L^p(\boz)$ with $p\in(2,n)$ was obtained in \cite{jl22}.

Let
$$p({\mathcal{L}}):=\sup\{p>2: \,\nabla \mathcal{L}^{-1/2}\ \text{is bounded on}\, L^p(\rn)\}.$$
Furthermore, denote by $L^{1}_{\rm loc}(\mathbb{R}^n)$ the set of \emph{all locally integrable functions on}
${\mathbb R}^n$. Recall that the \emph{space} $\mathrm{BMO}(\rn)$ is defined as the set of all $f\in L^1_\loc(\rn)$
satisfying
$$
\|f\|_{\mathrm{BMO}(\rn)}:=\sup_{B\subset\rn}\frac{1}{|B|}\int_{B}\lf|f(x)-\frac{1}{|B|}\int_B f(y)\,dy\r|\,dx<\fz,
$$
where the supremum is taken over all balls $B$ of $\rn$ (see, for instance, \cite{jn61,St93}).
Moreover, the \emph{space} $\mathrm{CMO}(\rn)$ is defined as
the completion of $C^\fz_{\rm c}(\rn)$ in the space $\mathrm{BMO}(\rn)$ (see, for instance, \cite{cw77}).
The \emph{space} $\mathrm{VMO}(\rn)$ is defined as the set of $f\in \mathrm{BMO}(\rn)$ satisfying
$${\lim_{r\to 0}\sup_{x\in \rn}\frac{1}{|B(x,r)|}\int_{B(x,r)}\lf|f(y)-\frac{1}{|B(x,r)|}\int_{B(x,r)} f(z)\,dz\r|\,dy=0.}$$
Note that $\mathrm{CMO}(\rn)\varsubsetneq \mathrm{VMO}(\rn) \varsubsetneq\mathrm{BMO}(\rn).$
Let us recall some results proved in \cite[Theorem 1.3, Theorem 1.4]{jl22}.

\begin{theorem}[\cite{jl22}]\label{app-dirichlet}
 Let $\Omega\subset \rn$ be an exterior Lipschitz domain, $n\ge 2$.

 (i) For all $p\in (1,\infty)$, it holds for all $f\in \dot{W}^{1,p}_0(\Omega)$ that
\begin{equation}\label{e1.0}
\|\mathcal{L}_D^{1/2}f\|_{L^p(\Omega)}\le C\|\nabla f\|_{L^p(\Omega)}.
\end{equation}

(ii) Suppose that  $A\in \mathrm{VMO}(\rn)$ and $n\ge 3$. There exist  $\epsilon>0$ and  $C>1$  such that, for all $f\in \dot{W}^{1,p}_0(\Omega)$ it holds
\begin{equation}\label{e1.1}
C^{-1}\|\nabla f\|_{L^p(\Omega)}\le\lf\|\mathcal{L}_D^{1/2}f\r\|_{L^p(\Omega)}\le C\|\nabla f\|_{L^p(\Omega)},
\end{equation}
where $1<p<\min\{n,p({\mathcal{L}}),3+\epsilon\}$. If $\Omega$ is $C^1$, then { \eqref{e1.1}}
holds for all $1<p<\min\{n,p({\mathcal{L}})\}$.
\end{theorem}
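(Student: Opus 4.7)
My approach to part (i) is based on the algebraic identity
$$\mathcal{L}_D^{1/2} f = -\mathcal{L}_D^{-1/2}\mathrm{div}(A\nabla f),$$
established first for $f\in C^\fz_{\rm c}(\Omega)$ and extended by density to $\dot{W}^{1,p}_0(\Omega)$. This reduces \eqref{e1.0} to the $L^p(\Omega)$ boundedness of the adjoint Riesz operator $\mathcal{L}_D^{-1/2}\mathrm{div}$. Since the Dirichlet semigroup $e^{-t\mathcal{L}_D}$ is pointwise dominated by and hence inherits the Gaussian upper bound of $e^{-t\mathcal{L}}$, Sikora's theorem in \cite{s04} (see also \cite{cd99}) gives boundedness of $\nabla\mathcal{L}_D^{-1/2}$ on $L^q(\Omega)$ for $q\in(1,2]$; by duality, $\mathcal{L}_D^{-1/2}\mathrm{div}$ is bounded on $L^p(\Omega)$ for $p\in[2,\fz)$, which settles \eqref{e1.0} in that range. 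For $p\in(1,2)$, where duality fails, I would resort to the vertical square-function characterization
$$\|\mathcal{L}_D^{1/2}f\|_{L^p(\Omega)}\sim\lf\|\lf(\int_0^\fz\lf|t\mathcal{L}_De^{-t^2\mathcal{L}_D}f\r|^2\,\dfrac{dt}{t}\r)^{1/2}\r\|_{L^p(\Omega)},$$
and then estimate the right-hand side by $\|\nabla f\|_{L^p(\Omega)}$ using $t\mathcal{L}_De^{-t^2\mathcal{L}_D}f=-t\,\mathrm{div}(A\nabla e^{-t^2\mathcal{L}_D}f)$ together with the Gaussian bound on $\sqrt{t}\nabla e^{-t\mathcal{L}_D}$.

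For part (ii), the upper bound in \eqref{e1.1} coincides with \eqref{e1.0}, so only the lower bound $\|\nabla f\|_{L^p(\Omega)}\ls\|\mathcal{L}_D^{1/2}f\|_{L^p(\Omega)}$, equivalently the $L^p$ boundedness of $\nabla\mathcal{L}_D^{-1/2}$, remains at stake; moreover, for $p\in(1,2]$ the bound is immediate from Sikora's theorem, so the interesting range is $2<p<\min\{n,p(\mathcal{L}),3+\epsilon\}$ (or $2<p<\min\{n,p(\mathcal{L})\}$ in the $C^1$ case). My plan is a boundary-layer localization: fix $R>0$ with $\Omega^c\subset B(0,R/2)$, pick $\eta\in C^\fz_{\rm c}(B(0,2R))$ with $\eta\equiv 1$ on $B(0,R)$, let $O:=\Omega\cap B(0,2R)$, and decompose for $g\in L^p(\Omega)$
$$\nabla\mathcal{L}_D^{-1/2}g=\eta\nabla\mathcal{L}_D^{-1/2}(\eta g)+\eta\nabla\mathcal{L}_D^{-1/2}((1-\eta)g)+(1-\eta)\nabla\mathcal{L}_D^{-1/2}g.$$
The first summand is supported in $O$, a bounded Lipschitz (respectively $C^1$) domain, and up to a smoothing correction it equals $\eta\nabla\mathcal{L}_{D,O}^{-1/2}(\eta g)$; Shen's sharp bound \cite{sh05a}, combined with a VMO perturbation reducing to constant coefficients, delivers the $L^p$ estimate for $1<p<3+\epsilon$ in the generic Lipschitz case and $1<p<\fz$ in the $C^1$ case. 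The third summand is supported in $\Omega\setminus B(0,R)$; extending by zero to $\rn$, I would compare it with the whole-space Riesz transform $\nabla\mathcal{L}^{-1/2}$, bounded on $L^p(\rn)$ for $p<p(\mathcal{L})$ by definition, the discrepancy being governed by the difference of Green functions $\mathcal{L}_D^{-1}-\mathcal{L}^{-1}$, which is weakly harmonic far from $\partial\Omega$ and enjoys $|x|^{2-n}$-type decay.

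The principal obstacle is the pair of cross terms encoding the non-commutativity of the cutoff with $\mathcal{L}_D^{-1/2}$. These I would treat via the subordination formula $\mathcal{L}_D^{-1/2}=\pi^{-1/2}\int_0^\fz e^{-t\mathcal{L}_D}\,t^{-1/2}\,dt$ together with Davies--Gaffney off-diagonal $L^2$-estimates for $\sqrt{t}\nabla e^{-t\mathcal{L}_D}$, upgraded to $L^p$-off-diagonal bounds by a semigroup-adapted Calder\'on--Zygmund decomposition in the VMO setting. The threshold $p<n$ enters precisely here: the Green kernel of $\mathcal{L}_D$ on $\Omega$ decays like $|x-y|^{2-n}$, and carries a one-dimensional space of solutions to $\mathcal{L}_Du=0$ whose gradient lies in $L^p$ exactly for $p\ge n$ (essentially the capacity function of $\Omega^c$), which both obstructs the Riesz bound at and above $p=n$, consistently with \cite{hs09}, and forces the absorption of cross terms into $\|\nabla f\|_{L^p(\Omega)}$ to rely on a Hardy-type inequality available on $\dot W^{1,p}_0(\Omega)$ only when $p<n$. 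The extra restriction $p<3+\epsilon$ in the generic Lipschitz case is Shen's sharp threshold on bounded Lipschitz domains and is lifted in the $C^1$ case by a small-VMO perturbation of the boundary.
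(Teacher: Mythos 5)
First, a point of record: the paper does not prove this theorem at all --- it is quoted verbatim from \cite{jl22} (Theorems 1.3 and 1.4 there), so there is no in-paper proof to match your proposal against. Judged on its own merits, your argument for part (i) is sound for $p\in[2,\infty)$: the identity $\mathcal{L}_D^{1/2}f=-\mathcal{L}_D^{-1/2}\mathrm{div}(A\nabla f)$ plus duality with the Coulhon--Duong/Sikora bound for $\nabla\mathcal{L}_D^{-1/2}$ on $L^{p'}$, $p'\in(1,2]$, is exactly the standard route. The genuine gap is the range $p\in(1,2)$. After writing $t\mathcal{L}_De^{-t^2\mathcal{L}_D}f=-te^{-t^2\mathcal{L}_D}\mathrm{div}(A\nabla f)$ (note you must commute the semigroup past $\mathcal{L}_D$; the formula as you wrote it, with $\nabla e^{-t^2\mathcal{L}_D}f$ inside, does not produce $\nabla f$), you need the square function built on $T_t:=te^{-t^2\mathcal{L}_D}\mathrm{div}$ to be bounded on $L^p(\Omega)$ for $p$ all the way down to $1$. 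The adjoint family is $t\nabla e^{-t^2\mathcal{L}_D}$, and pushing the square-function bound below $2-\epsilon$ by the Blunck--Kunstmann/Auscher criteria requires $L^2\to L^q$ (off-diagonal) bounds for $\sqrt{t}\nabla e^{-t\mathcal{L}_D}$ for large $q$ --- precisely what fails on exterior domains for $q\ge n$, as this paper itself recalls from \cite[Proposition 8.1]{kvz16}. ``The Gaussian bound on $\sqrt{t}\nabla e^{-t\mathcal{L}_D}$'' you invoke is an $L^2$ (Davies--Gaffney) statement and does not bridge this. This is exactly why the reverse inequality for $p<2$ on exterior domains/manifolds with ends is the subject of the separate works \cite{dr22} and \cite{jl22}, and not a routine consequence of heat-kernel upper bounds; your sketch stops at the point where the real work begins.

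For part (ii), the boundary-layer decomposition with a cutoff $\eta$, Shen's sharp bounded-domain estimate for the near part, and the whole-space Riesz transform for the far part is the right circle of ideas and broadly consistent with how \cite{jl22} localizes. But two steps are asserted rather than proved. First, ``up to a smoothing correction it equals $\eta\nabla\mathcal{L}_{D,O}^{-1/2}(\eta g)$'' is not innocuous: $\mathcal{L}_D^{-1/2}$ is nonlocal (subordination integrates the semigroup over all times, and for large $t$ the semigroup on $\Omega$ and on $O$ differ substantially even on $\supp\eta$), so the commutator/comparison terms are of the same strength as the main terms; controlling them is where the hypothesis $p<n$ must enter quantitatively, not merely heuristically via Green-function decay. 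Second, \cite{jl22} in fact does not close the argument by absorbing cross terms with off-diagonal estimates alone; it proves a characterization (\cite[Theorem 1.1]{jl22}) of $L^p$-boundedness of $\nabla\mathcal{L}_D^{-1/2}$ in terms of reverse H\"older/gradient estimates for $\mathcal{L}$-harmonic functions, in the spirit of Shen's good-$\lambda$ method, and then verifies those estimates under the VMO and $C^1$ hypotheses. Your plan is a genuinely different (more direct) route, but as written it leaves unverified exactly the estimates that make the theorem true for $p<n$ and false for $p\ge n$.
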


\begin{remark}\rm
The version of \eqref{e1.0} for Neumann boundary operators $\mathcal{L}_N$ has been recently proved in \cite{dr22}
on complete manifolds with ends. Although the results in \cite{dr22} were presented in smooth manifolds setting, their proofs
extend to exterior Lipschitz domains almost identically and show that for all $p\in (1,\infty)$
\begin{equation*}
\|\mathcal{L}_N^{1/2}f\|_{L^p(\Omega)}\le C\|\nabla f\|_{L^p(\Omega)}.
\end{equation*}
Note that the heat kernel satisfies two side Gaussian bounds;
see \cite[Proof of Theorem 1.2]{jl22}.
\end{remark}

For the case $\mathcal{L}=-\Delta$ being the Laplacian operator and $\Omega$ being $C^1$, $p(\mathcal{L})=\infty$
and $\epsilon=\infty$. In this case, it follows from the above results that $\nabla\Delta_D^{-1/2}$ is bounded on $L^p(\Omega)$
for $1<p<n$.
By the unboundedness results on the Riesz transform $\nabla\Delta_D^{-1/2}$ established
in \cite{hs09}, the range $(1,\min\{n,3+\epz\})$ of $p$ for \eqref{e1.1} is sharp; see also \cite{jl22,kvz16}.

The main purpose of this paper is to further investigate the case $p\ge n$.
Note that from Theorem \ref{app-dirichlet}, the boundedness of the Riesz transform $\nabla \mathcal{L}_D^{-1/2}$
depends on $n,p(\mathcal{L})$ and the geometry of the boundary $\partial\Omega$. All the dependence are essential, see the characterizations obtained by \cite[Theorem 1.1]{jl22},
the regularity dependence of the boundary by \cite{jk95}, and the counter-examples provided in \cite{hs09,kvz16,jl22}.
However, for operator with nice coefficients and domain with nice boundary ($C^1$ or small Lipschitz constant)
such that $p(\mathcal{L}),3+\epsilon\ge n$, we can find a suitable substitution of $\dot{W}^{1,p}_0(\Omega)$ space for
the inequality \eqref{e1.1} as following.

Let us assume that the matrix $A$ in the operator $\mathcal{L}$ is in the space $\mathrm{VMO}(\rn)$ and satisfies the perturbation
$$\fint_{B(x_0,r)}|A-I_{n\times n}|\,dx\le \frac{C}{r^\delta} \leqno(GD)$$
for some $\delta>0$, all $r>1$ and all $x_0\in\rn$.
Or we assume that $A\in \mathrm{CMO}(\rn)$. In both cases, from \cite{jl20} and  \cite[Theorem 1]{is98} respectively,
it is known that
$$p(\mathcal{L})=\infty.$$

We have the following replacement for the Riesz inequality for $p\ge n$ and $p>2$.
\begin{theorem}\label{c1.2}
Let $n\ge2$ and $\boz\subset\rn$ be an exterior $C^1$ domain. Assume that { $A\in\mathrm{VMO}(\rn)$ satisfies $(GD)$ or $A\in\mathrm{CMO}(\rn)$}.
Let $p>2$ and $p\in [n,\infty)$.

(i) The kernel space $\mathcal{K}_{p}(\mathcal{L}_D^{1/2})$ of $\mathcal{L}_D^{1/2}$ in $\dot{W}^{1,p}_0(\Omega)$ coincides with $\tilde{\mathcal{A}}^p_0(\boz):=\{\phi\in \dot{W}_0^{1,p}(\boz):\ \mathcal{L}_D f=0\}.$ Moreover, when $n\ge3$, $\tilde{\mathcal{A}}^p_0(\boz)=\mathcal{A}^p_0(\boz):=\{c(u_0-1): \ c\in\rr\},$
where $u_0$ is the unique solution in $\dot{W}^{1,2}(\boz)\cap\dot{W}^{1,p}(\boz)$ of the problem
\begin{equation*}
\lf\{\begin{array}{ll}
-\mathrm{div}(A\nabla u_0)=0\ \ &\text{in}\ \boz,\\
u_0=1\  &\text{on}\  \partial\boz;
\end{array}\r.
\end{equation*}
when $n=2$, $\tilde{\mathcal{A}}^p_0(\boz)=\mathcal{A}^p_0(\boz):=\{c(u_0-u_1): \ c\in\rr\},$
where $u_0$ is the unique solution in $\dot{W}^{1,2}(\boz)\cap\dot{W}^{1,p}(\boz)$ of the problem
\begin{equation*}
\lf\{\begin{array}{ll}
-\mathrm{div}(A\nabla u_0)=0\ \ &\text{in}\ \boz,\\
u_0=u_1\  &\text{on}\  \partial\boz,
\end{array}\r.
\end{equation*}
and $u_1\in\dot{W}^{1,p}(\rr^2)$ is a solution of the problem
$\mathcal{L}u=\frac{1}{|\partial\boz|}\mathbf{1}_{\partial\boz}$ in $\rr^2$. Here $\mathbf{1}_{\partial\boz}$ denotes
the characteristic function of $\partial\boz$ in $\rr^2$.

(ii) It holds for all
$f\in \dot{W}_0^{1,p}(\boz)$ that
\begin{equation}\label{e1.3}
\inf_{\phi\in \mathcal{K}_p(\mathcal{L}_D^{1/2})}\lf\|\nabla f-\nabla\phi\r\|_{L^p(\boz)}\le C\lf\|\mathcal{L}^{1/2}_D f\r\|_{L^p(\boz)},
\end{equation}
and consequently, it holds that
\begin{equation*}
\inf_{\phi\in \mathcal{K}_p(\mathcal{L}_D^{1/2})}\lf\|\nabla (f-\phi)\r\|_{L^p(\boz)}\sim  \inf_{\phi\in \mathcal{K}_p(\mathcal{L}_D^{1/2})}\lf\|\mathcal{L}^{1/2}_D (f-\phi)\r\|_{L^p(\boz)}.
\end{equation*}
\end{theorem}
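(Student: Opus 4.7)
The upper bound in the equivalence claimed at the end of (ii) follows immediately from Theorem~\ref{app-dirichlet}(i) applied to $f-\phi\in\dot{W}^{1,p}_0(\boz)$, so the new content is the kernel description in (i) together with the lower bound \eqref{e1.3}. For (i), I identify $\mathcal K_p(\mathcal L_D^{1/2})$ with $\tilde{\mathcal A}^p_0(\boz)$ by unwinding the definitions: for $\phi\in\tilde{\mathcal A}^p_0$, the quadratic-form identity $\int_{\boz}A\nabla\phi\cdot\nabla\psi\,dx=0$ for every $\psi\in C^{\fz}_{\rm c}(\boz)$ yields $\int_{\boz}\mathcal L_D^{1/2}\phi\,\mathcal L_D^{1/2}\psi\,dx=0$, and $L^{p'}$-density of $\mathcal L_D^{1/2}(C^{\fz}_{\rm c}(\boz))$ forces $\mathcal L_D^{1/2}\phi=0$; the reverse inclusion is analogous. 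To exhibit the explicit one-dimensional generator when $n\ge 3$, I would solve the transient Dirichlet problem $\mathcal L u_0=0$ in $\boz$, $u_0|_{\partial\boz}=1$, $u_0\to 0$ at infinity, using $C^1$ regularity of $\partial\boz$ and the $\mathrm{CMO}$ (or $\mathrm{VMO}+(GD)$) regularity of $A$; the ensuing decay $|\nabla u_0(x)|\lesssim|x|^{1-n}$ places $u_0-1\in\dot W^{1,p}_0(\boz)$ for every $p\ge n$. One-dimensionality follows from a Liouville-type argument: for $p>n$, any $\phi\in\tilde{\mathcal A}^p_0$ has an infinity limit $c_\phi$ by Morrey, and $\phi+c_\phi(1-u_0)$ is then $\mathcal L_D$-harmonic with vanishing trace and vanishing infinity limit, hence zero by the maximum principle; the critical index $p=n$ is handled by approximation from $p_k\downarrow n$. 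The planar case $n=2$ is analogous after introducing $u_1$ to compensate the logarithmic divergence of the fundamental solution.

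For (ii), I exploit $p(\mathcal L)=\fz$ (available from \cite{jl20,is98} under the present hypotheses) to reduce to the full-space Riesz estimate on $\rn$. Given $g:=\mathcal L_D^{1/2}f\in L^p(\boz)$ and $\tilde g$ the zero extension of $g$ to $\rn$, the function $F:=\mathcal L^{-1/2}\tilde g$ lies in $\dot W^{1,p}(\rn)$ with $\|\nabla F\|_{L^p(\rn)}\lesssim\|g\|_{L^p(\boz)}$. The difference $u:=\mathcal L_D^{-1/2}g-F|_{\boz}$ is $\mathcal L_D$-harmonic in $\boz$ with trace $-F|_{\partial\boz}$. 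Applying Shen's sharp Lipschitz $L^p$-theory (\cite{sh05a}) on a bounded annulus near $\partial\boz$ controls $\nabla u$ locally, while an exterior Green-kernel representation yields decay of $\nabla u$ at infinity modulo a single scalar; subtracting that scalar times $(u_0-1)$ produces the $\phi\in\mathcal K_p$ satisfying \eqref{e1.3}.

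The principal obstacle is the last extraction, namely identifying the scalar obstruction as a bounded linear functional of $g$ with norm controlled by $\|g\|_{L^p(\boz)}$. For $p>n$ the natural candidate is the pointwise infinity limit of $u$, but controlling it in terms of $\|g\|_{L^p(\boz)}$ requires sharp Green-kernel asymptotics for $\mathcal L_D$ on an exterior Lipschitz domain, essentially an exterior analogue of Shen's estimates. At the critical index $p=n$ no such pointwise limit exists, forcing a passage to the limit from $p_k\downarrow n$ with uniform constants, which is precisely where the $\mathrm{CMO}$ (or $\mathrm{VMO}+(GD)$) hypothesis on $A$ becomes indispensable in ruling out logarithmic losses. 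In two dimensions the argument is more delicate because transience fails, so the correction involves $u_1$ from the outset and both its construction and the asymptotic analysis of $u$ must be revisited with logarithmic weights.
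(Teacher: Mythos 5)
Your proposal has genuine gaps in both parts, and the decisive ideas of the paper's proof are missing. In (ii), the decomposition is flawed at the outset: with $g:=\mathcal{L}_D^{1/2}f$, $\tilde g$ its zero extension and $F:=\mathcal{L}^{-1/2}\tilde g$, the difference $u:=f-F|_\boz$ satisfies $\mathcal{L}u=\mathcal{L}_D^{1/2}g-\mathcal{L}^{1/2}\tilde g|_\boz$, and there is no reason for $\mathcal{L}_D^{1/2}g$ and $\mathcal{L}^{1/2}\tilde g$ to agree on $\boz$ (they are square roots of different operators), so $u$ is \emph{not} $\mathcal{L}_D$-harmonic and the subsequent boundary-layer analysis does not start. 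Moreover, you yourself flag the ``scalar extraction'' as unresolved; that step, requiring exterior Green-kernel asymptotics and breaking down at $p=n$, is precisely what the paper avoids. The paper's route is purely functional-analytic: by Theorem \ref{app-dirichlet}(i) and duality, $\|\mathcal{L}_D^{1/2}h\|_{\dot W_0^{-1,p}(\boz)}\lesssim\|h\|_{L^p(\boz)}$, so $\mathcal{L}_Df=\mathcal{L}_D^{1/2}(\mathcal{L}_D^{1/2}f)\in\dot W_0^{-1,p}(\boz)$ with norm $\lesssim\|\mathcal{L}_D^{1/2}f\|_{L^p}$; one then proves (Lemma \ref{l2.5}) that $\mathcal{L}_D:\dot W_0^{1,p}(\boz)/\tilde{\mathcal{A}}^p_0(\boz)\to\dot W_0^{-1,p}(\boz)$ is an isomorphism by a cutoff localization (whole-space solvability from $q(\mathcal{L})=\fz$, bounded-domain solvability from Shen's theory) and the Open Mapping Theorem. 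No pointwise limit at infinity, no Green function, and no case distinction at $p=n$ is needed.

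In (i), two steps fail as stated. First, the quadratic-form identity $\int_\boz A\nabla\phi\cdot\nabla\psi=\int_\boz\mathcal{L}_D^{1/2}\phi\,\mathcal{L}_D^{1/2}\psi$ is an $L^2$ functional-calculus fact, but the kernel elements $\phi\in\dot W_0^{1,p}(\boz)$, $p\ge n>2$, lie outside the form domain $\dot W_0^{1,2}(\boz)$ (indeed $u_0-1$ tends to $-1$ at infinity and is not in $\dot W_0^{1,2}$ for $n\ge3$), and the claimed $L^{p'}$-density of $\mathcal{L}_D^{1/2}(C_c^\fz(\boz))$ is unproved and essentially circular; the paper instead justifies $\mathcal{L}_D^{1/2}\phi=0$ by integrating by parts against the heat kernel, using the weighted gradient bounds $\int_\boz|\nabla_xp_t^D(x,y)|^2e^{\gz|x-y|^2/t}dx\le Ct^{1-n/2}$ to put $p_t^D(x,\cdot)\in W_0^{1,p'}(\boz)$. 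Second, your one-dimensionality argument rests on ``any $\phi\in\tilde{\mathcal{A}}^p_0$ has an infinity limit by Morrey,'' which is false for general $\dot W^{1,p}$ functions with $p>n$ on exterior domains (e.g.\ $\log\log|x|$ has $\nabla(\log\log|x|)\in L^p$ near infinity for all $p\ge n$ but no limit), and the proposed approximation $p_k\downarrow n$ goes the wrong way since $\dot W^{1,n}_0\not\subset\dot W^{1,p_k}_0$ for $p_k>n$. The paper's Proposition \ref{p2.1} instead extends $\phi$ by zero to $\rn$, identifies $\mathrm{div}(A\nabla\phi)$ with a compactly supported $h\in\dot W^{-1,p}(\rn)$ carried by the conormal derivative on $\partial\boz$, solves $\mathcal{L}w=h$ globally, and uses uniqueness in $\dot W^{1,p}(\rn)$ modulo constants (from $q(\mathcal{L})=\fz>n$) to force $\phi=c(u_0-1)$ (resp.\ $c(u_0-u_1)$ when $n=2$), which handles $p=n$ on the same footing.
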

The \emph{symbol} $f\sim g$ means
$f\ls g$ and $g\ls f$, which stands for $f\le Cg$ and $g\le Ch$.
The main new ingredient appeared in Theorem \ref{c1.2} is identifying the kernel $\mathcal{K}_{p}(\mathcal{L}_D^{1/2})$ of $\mathcal{L}_D^{1/2}$ in $\dot{W}^{1,p}_0(\Omega)$
as the space $\mathcal{A}^p_0(\boz)$, which is motivated by the work of Amrouche, Girault and Giroire \cite{agg97}.
We can actually establish a more general version of Theorem \ref{c1.2}, provided that $p(\mathcal{L})\ge n$
and the boundary $\partial\Omega$ is $C^1$ or with small Lipschitz constant; see Theorem \ref{t1.1} below.

Let us remark that we can have an explicit description in the exterior setting is due to the boundedness of the Riesz transform in $\rn$
for $1<p<\infty$ and the special geometry of exterior domains. From previous results  of Riesz transforms from \cite{acdh04,cjks20,jl20}, we know in case of $p\in (2,\infty)$, both locally and global  geometry can destroy
the boundedness of the Riesz transform. In particular, a local perturbation of $A$ may result in huge difference
of behavior of the Riesz transform for $p>2$; see \cite{jl20} for instance. So generally speaking it is hard (at least to us) to have an explicit
description of the kernel space. For the case of exterior domains, under the assumption of $p(\mathcal{L})=\infty$, we see that the kernel space that breaks down the boundedness of the
Riesz transform for $p\ge n$ and $p>2$  is actually only {\em one dimensional subspace} of $\dot{W}^{1,p}(\boz)$.

Finally let us apply Theorem \ref{c1.2} to the mapping property of the gradient of heat semigroup, which plays important roles in the study of
of Schr\"odinger equations, see \cite{I10,IP08,IP10,kvz16b,lsz12} for instance. For the operator
$\sqrt t\nabla e^{-t\mathcal{L}_\Omega}$, it was known that there are no uniform $L^p$-bounds in $t$ for $p>n$; see \cite[Proposition 8.1]{kvz16}.
As an application of \eqref{e1.3} of Theorem \ref{c1.2} , we have the following substitution.

\begin{theorem}
Let $n\ge2$ and $\boz\subset\rn$ be an exterior $C^1$ domain. Assume that { $A\in\mathrm{VMO}(\rn)$ satisfies $(GD)$ or $A\in\mathrm{CMO}(\rn)$}.
Let $p>2$ and $p\in [n,\infty)$. Then it holds that
\begin{equation*}
\inf_{\phi\in \mathcal{K}_p(\mathcal{L}_D^{1/2})} \lf\|\sqrt t \nabla e^{-t\mathcal{L}_D}f-\nabla\phi\r\|_{L^p(\boz)}\le C\|f\|_{L^p(\boz)},\,\forall\,t>0.
\end{equation*}
\end{theorem}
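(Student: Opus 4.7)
The plan is to derive the desired estimate directly from inequality \eqref{e1.3} of Theorem \ref{c1.2}(ii) combined with a standard uniform semigroup bound. Fix $f\in L^p(\boz)$ and $t>0$, and set $u_t:=e^{-t\mathcal{L}_D}f$. First I would verify that $u_t\in\dot{W}^{1,p}_0(\boz)$, so that Theorem \ref{c1.2}(ii) can be applied to it. By density one may reduce to $f\in C^\fz_{\rm c}(\boz)$; the analyticity of $e^{-t\mathcal{L}_D}$ on $L^p(\boz)$, a consequence of the Gaussian upper bound on the heat kernel recalled after Theorem \ref{app-dirichlet}, places $u_t$ in $D(\mathcal{L}_D^k)$ for every $k\in\nn$, and elliptic regularity together with Theorem \ref{app-dirichlet}(i) shows that $u_t$ has a representative in $\dot{W}^{1,p}_0(\boz)$.

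Next I would apply \eqref{e1.3} to $u_t$, which yields
$$\inf_{\phi\in\mathcal{K}_p(\mathcal{L}_D^{1/2})}\lf\|\nabla e^{-t\mathcal{L}_D}f-\nabla\phi\r\|_{L^p(\boz)}\le C\lf\|\mathcal{L}^{1/2}_De^{-t\mathcal{L}_D}f\r\|_{L^p(\boz)}.$$
Since $\mathcal{K}_p(\mathcal{L}_D^{1/2})$ is a linear subspace of $\dot{W}^{1,p}_0(\boz)$, the map $\phi\mapsto\sqrt{t}\,\phi$ is a bijection of this space onto itself, so multiplying by $\sqrt{t}$ and relabelling the optimising element gives
$$\inf_{\phi\in\mathcal{K}_p(\mathcal{L}_D^{1/2})}\lf\|\sqrt{t}\,\nabla e^{-t\mathcal{L}_D}f-\nabla\phi\r\|_{L^p(\boz)}\le C\,\sqrt{t}\,\lf\|\mathcal{L}^{1/2}_De^{-t\mathcal{L}_D}f\r\|_{L^p(\boz)}.$$

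It remains to establish the uniform-in-$t$ bound $\sqrt{t}\,\|\mathcal{L}^{1/2}_De^{-t\mathcal{L}_D}f\|_{L^p(\boz)}\le C\|f\|_{L^p(\boz)}$. This is the $L^p$-boundedness of the spectral multiplier $m(t\mathcal{L}_D)$ with $m(\lambda)=\sqrt{\lambda}\,e^{-\lambda}$. I would obtain it from the subordination identity
$$\mathcal{L}^{1/2}_De^{-t\mathcal{L}_D}=\frac{1}{\sqrt{\pi}}\int_0^\fz s^{-1/2}\mathcal{L}_De^{-(s+t)\mathcal{L}_D}\,ds,$$
coupled with the analyticity estimate $\|\mathcal{L}_De^{-\tau\mathcal{L}_D}\|_{L^p\to L^p}\ls 1/\tau$, which holds for every $1<p<\fz$ again as a consequence of the Gaussian heat kernel bound. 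The elementary integral identity $\int_0^\fz s^{-1/2}(s+t)^{-1}\,ds=\pi/\sqrt{t}$ then yields the claim, and the proof is complete.

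No serious obstacle is expected: the substantive content is already packaged into Theorem \ref{c1.2}, and the only point calling for a short technical verification is the membership $u_t\in\dot{W}^{1,p}_0(\boz)$ (handled above by density and regularity). The remainder of the argument is a direct combination of \eqref{e1.3} with the translation-into-kernel trick and a classical analytic-semigroup estimate, neither of which depends on the exterior geometry in an essential way beyond the already-used Gaussian heat kernel bound.
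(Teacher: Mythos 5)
Your proposal is correct and follows essentially the same route as the paper: apply \eqref{e1.3} to $e^{-t\mathcal{L}_D}f$ and then use the analyticity of the heat semigroup (which the paper invokes directly and you justify via subordination and the bound $\|\mathcal{L}_De^{-\tau\mathcal{L}_D}\|_{L^p\to L^p}\lesssim 1/\tau$) to get the uniform bound $\sqrt{t}\,\|\mathcal{L}_D^{1/2}e^{-t\mathcal{L}_D}f\|_{L^p(\Omega)}\le C\|f\|_{L^p(\Omega)}$. The extra care you take with the membership $e^{-t\mathcal{L}_D}f\in\dot{W}^{1,p}_0(\Omega)$ goes beyond what the paper records but does not change the argument.
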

The proof is straightforward by using \eqref{e1.3} and the analyticity of the heat semigroup, as
\begin{equation*}
\inf_{\phi\in \mathcal{K}_p(\mathcal{L}_D^{1/2})} \lf\|\sqrt t \nabla e^{-t\mathcal{L}_D}f-\nabla\phi\r\|_{L^p(\boz)}\le C\lf\|\sqrt t \mathcal{L}_D^{1/2} e^{-t\mathcal{L}_D}f\r\|_{L^p(\boz)}\le C\|f\|_{L^p(\boz)},\,\forall\,t>0.
\end{equation*}

In the particular case $\mathcal{L}=\Delta$ and $\Omega=\rn\setminus B(0,1)$, it is clear that the kernel space is exactly as
\begin{align}
\mathcal{K}_{p}(\Delta_D^{1/2})=\tilde{\mathcal{A}}^p_0(\boz)=
\begin{cases}
\{c(1-|x|^{2-n}),\, |x|>1,\,c\in\rr\}, & n\ge 3,\\
\{c\log|x|,\,|x|>1,\, c\in\rr\}, & n=2,
\end{cases}
\end{align}
where $p\ge n$ and $p>2$.  We therefore have the following corollary.
\begin{corollary}\label{c-heat-ball}
Let $n\ge2$ and $\Omega=\rn\setminus B(0,1)$.
Let $p>2$ and $p\in [n,\infty)$. Then it holds that
\begin{equation*}
\inf_{c\in\rr} \lf\|\sqrt t \nabla e^{-t\Delta_D}f-\frac{cx}{|x|^n}\r\|_{L^p(\boz)}\le C\|f\|_{L^p(\boz)},\,\forall\,t>0.
\end{equation*}
\end{corollary}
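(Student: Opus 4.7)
The plan is to specialize the preceding theorem to $\mathcal{L}=-\Delta$ (so that $A=I_{n\times n}$ trivially belongs to $\mathrm{CMO}(\rn)$ and satisfies $(GD)$) and to $\Omega=\rn\setminus B(0,1)$. That theorem immediately yields
$$\inf_{\phi\in\mathcal{K}_p(\Delta_D^{1/2})}\lf\|\sqrt{t}\,\nabla e^{-t\Delta_D}f-\nabla\phi\r\|_{L^p(\Omega)}\le C\|f\|_{L^p(\Omega)},$$
so the only remaining task is to verify the identification $\{\nabla\phi:\phi\in\mathcal{K}_p(\Delta_D^{1/2})\}=\{cx/|x|^n:c\in\rr\}$ that is asserted just before the corollary; once this is done, the infimum over $\phi$ collapses to the infimum over $c\in\rr$ displayed in the statement.

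For $n\ge 3$, Theorem \ref{c1.2} identifies the kernel as $\{c(u_0-1):c\in\rr\}$, where $u_0$ is the unique $\dot{W}^{1,2}(\Omega)\cap\dot{W}^{1,p}(\Omega)$ solution of $\Delta u_0=0$ in $\Omega$ with $u_0=1$ on $\partial\Omega$. The radial harmonic function $u_0(x)=|x|^{2-n}$ satisfies the boundary condition and lies in the required spaces (one checks $\nabla u_0\in L^q(\Omega)$ whenever $q>n/(n-1)$, in particular for $q\in\{2,p\}$ since $p\ge n$), so by uniqueness it is the desired $u_0$. Consequently $\nabla(u_0-1)=(2-n)x/|x|^n$, and the nonzero factor $2-n$ can be absorbed into the free constant $c$.

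For $n=2$, one first has to compute $u_1$ from $-\Delta u_1=\frac{1}{2\pi}\mathbf{1}_{\partial\Omega}$ in $\rr^2$, interpreted as (normalized) surface measure on $\partial B(0,1)$. Convolving with the 2D fundamental solution $-\frac{1}{2\pi}\log|x|$ and using the planar mean-value identity
$$\frac{1}{2\pi}\int_{\partial B(0,1)}\log|x-y|\,d\sigma(y)=\begin{cases}\log|x|,&|x|\ge 1,\\ 0,&|x|\le 1,\end{cases}$$
yields $u_1(x)=-\frac{1}{2\pi}\log|x|$ for $|x|\ge 1$ and $u_1\equiv 0$ on $\overline{B(0,1)}$; note that $u_1\in\dot{W}^{1,p}(\rr^2)$ since $p>2$. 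Because $u_1\equiv 0$ on $\partial\Omega$, uniqueness for the exterior Dirichlet problem in $\dot{W}^{1,p}(\Omega)$ forces $u_0\equiv 0$, hence $\mathcal{K}_p(\Delta_D^{1/2})=\{c(u_0-u_1):c\in\rr\}=\{c\log|x|:c\in\rr\}$ after absorbing $\frac{1}{2\pi}$ into $c$, and $\nabla(c\log|x|)=cx/|x|^2=cx/|x|^n$. Inserting these explicit gradients into the preceding theorem gives the claim. The only step requiring real care is the $n=2$ computation of $u_1$ via the mean-value identity; the rest is direct verification.
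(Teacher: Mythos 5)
Your proposal is correct and follows essentially the same route as the paper: specialize the preceding theorem to $\Delta_D$ on $\rn\setminus B(0,1)$ and identify the kernel $\mathcal{K}_p(\Delta_D^{1/2})$ explicitly as $\{c(1-|x|^{2-n})\}$ for $n\ge3$ and $\{c\log|x|\}$ for $n=2$, so that $\nabla\phi$ ranges over $\{cx/|x|^n\}$. The only difference is that the paper states this identification as ``clear'' without computation, whereas you verify it via the explicit radial harmonic function (for $n\ge3$) and the mean-value identity for $\log|x-y|$ (for $n=2$); both verifications are sound.
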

It is clear from \cite[Proposition 8.1]{kvz16} that in the LHS of the last inequality the  infimum is not attained at $c=0$. Moreover, since for  $f\in L^p(\Omega)$,
$\sqrt t \nabla e^{-t\mathcal{L}_\Omega}f$ does belong to $L^p(\Omega)$ (without uniform bound in $t$), the infimum shall be attained at the finite $c$ which depends on $f$ and $t$.

We shall first prove an intermediate version of Theorem \ref{c1.2} in Section 2, we shall then show the equivalence of the spaces $\mathcal{A}^p_0(\boz)$, $\tilde{\mathcal{A}}^p_0(\boz)$ and
$K_p(\mathcal{L}_D^{1/2})$ and complete the proof of Theorem \ref{c1.2} in Section 3.

Throughout the whole {paper}, we always denote by $C$ or
$c$ a \emph{positive constant} which is independent of the main parameters, but it may vary from line
to line.
For any measurable subset $E$ of $\rn$, we denote by $E^c$
the \emph{set} $\rn\setminus E$ and by $\mathbf{1}_{E}$ its \emph{characteristic function}. Furthermore, for any $q\in[1,\fz]$,
we denote by $q'$ its \emph{conjugate exponent}, namely $1/q+1/q'= 1$. Finally, for any measurable set
$E\subset\rn$ and (vector-valued or matrix-valued) function $f\in L^1(E)$, we denote the integral
$\int_{E}|f(x)|\,dx$ simply by $\int_{E}|f|\,dx$ and, when $|E|<\fz$, we use the notation
$$
(f)_E:=\fint_Ef(x)\,dx:=\frac{1}{|E|}\int_{E}f(x)\,dx.
$$

\section{On boundedness of the Riesz transform}\label{s2}
In this section we prove the following more general version Theorem \ref{t1.1} of Theorem \ref{c1.2} (ii) with $\mathcal{K}_p(\mathcal{L}_D^{1/2})$ replaced by $\tilde{\mathcal{A}}^p_0(\boz)$, which is defined as
$$\tilde{\mathcal{A}}^p_0(\boz)=\lf\{w\in\dot{W}_0^{1,p}(\boz):\ \mathcal{L}_Dw=0\r\}.$$
Let us begin with some necessary notations.

\begin{definition}\label{d1.1}
Let $\mathcal{L}:=-\mathrm{div}(A\nabla\cdot)$ be a second-order divergence form elliptic operator on $\rn$.
Denote by $(q(\mathcal{L})', q(\mathcal{L}))$ the \emph{interior of the maximal interval of exponents}
$q\in[1,\infty]$ such that the operator $\nabla\mathcal{L}^{-1}\mathrm{div}$ is bounded on $L^q(\rn)$.

Furthermore, let $O$ be a bounded Lipschitz domain of $\rn$ and let $\mathcal{L}_{D,O}:=-\mathrm{div}(A\nabla\cdot)$
be a second-order divergence form elliptic operator on $O$ subject to the Dirichlet boundary condition. Similarly,
denote by $(q(\mathcal{L}_{D,O})', q(\mathcal{L}_{D,O}))$ the \emph{interior of the maximal interval of exponents}
$q\in[1,\infty]$ such that $\nabla\mathcal{L}_{D,O}^{-1}\mathrm{div}$ is bounded on $L^q(O)$.
\end{definition}

\begin{remark}\label{r1.1}
It is well known that there exists a constant $\epz_0\in(0,\fz)$ depending on the matrix $A$ and $n$ such
that $(2-\epz_0,2+\epz_0)\subset(q(\mathcal{L})', q(\mathcal{L}))$ (see, for instance, \cite{is98}).
Similarly, there exists a constant $\epz_1\in(0,\fz)$
depending on $A$, $n$, and the Lipschitz constant of $O$ such that $(2-\epz_1,2+\epz_1)\subset(q(\mathcal{L}_{D,O})',
q(\mathcal{L}_{D,O}))$.
\end{remark}

\begin{remark}
Note that $q(\mathcal{L})=p(\mathcal{L})$. In fact, since for $1<p<\infty$  it holds that
\begin{eqnarray*}
\lf\|\mathcal{L}^{1/2}f\r\|_{L^p(\rn)}\le C\|\nabla f\|_{L^p(\rn)}
\end{eqnarray*}
(see \cite{at01}), one further has
\begin{eqnarray*}
\lf\|\mathcal{L}^{-1/2}\mathrm{div}\r\|_{p\to p}= \lf\|\mathcal{L}^{1/2}\mathcal{L}^{-1}\mathrm{div}\r\|_{p\to p}\le
 C\lf\|\nabla \mathcal{L}^{-1}\mathrm{div}\r\|_{p\to p},
\end{eqnarray*}
which by duality implies that, the $L^p$-boundeness of $\nabla \mathcal{L}^{-1}\mathrm{div}$ implies $L^{p'}$-boundedness of $\nabla \mathcal{L}^{-1/2}.$
On the other hand, note that for $p\in (1,p(\mathcal{L}))$, $\nabla \mathcal{L}^{-1/2}$ is bounded on $L^p(\rn)$. Therefore, for $p\in (p(\mathcal{L})',p(\mathcal{L}))$, we have that
\begin{eqnarray*}
\lf\|\nabla \mathcal{L}^{-1}\mathrm{div}\r\|_{p\to p}=\lf\|\nabla \mathcal{L}^{-1/2}\lf(\mathcal{L}^{-1/2}\mathrm{div}\r)\r\|_{p\to p}\le\lf\| \nabla \mathcal{L}^{-1/2}\r\|_{p\to p} \lf\|\nabla \mathcal{L}^{-1/2}\r\|_{p'\to p'}<\infty.
\end{eqnarray*}
Thus we have  $q(\mathcal{L})=p(\mathcal{L})$.
\end{remark}
In what follows, for any $x\in\rn$ and $r\in(0,\fz)$, we always let $B(x,r):=\{y\in\rn:\ |y-x|<r\}$. Note that on $\rn$ the maximal interval
for the $L^p$-boundedness of the Riesz transform is open (cf. \cite{cjks20}), so we may assume that   $p(\mathcal{L})=q(\mathcal{L})>n$.
\begin{theorem}\label{t1.1}
Let $n\ge2$ and $\boz\subset\rn$ be an exterior Lipschitz domain. Take a large $R\in(0,\fz)$ such that $\boz^c\subset B(0,R)$.
Let $\boz_R:=\boz\cap B(0,R)$. { Assume that $\min\{q(\mathcal{L}),q(\mathcal{L}_{D,\boz_R})\}>n$ and $2<p\in [n, \min\{q(\mathcal{L}),
q(\mathcal{L}_{D,\boz_R})\})$.} Then there exists a
positive constant $C$ such that, for any $f\in \dot{W}_0^{1,p}(\boz)$,
\begin{equation*}
\inf_{\phi\in\tilde{\mathcal{A}}^p_0(\boz)}\lf\|\nabla f-\nabla\phi\r\|_{L^p(\boz)}\le C\lf\|\mathcal{L}^{1/2}_D f\r\|_{L^p(\boz)},
\end{equation*}
where $\tilde{\mathcal{A}}^p_0(\boz)=\{\phi\in \dot{W}_0^{1,p}(\boz):\ \mathcal{L}_D f=0\}$.
\end{theorem}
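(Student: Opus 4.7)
Given $f \in \dot{W}^{1,p}_0(\Omega)$, set $g := \mathcal{L}_D^{1/2} f \in L^p(\Omega)$; the strategy is to combine the boundedness of $\nabla \mathcal{L}^{-1/2}$ on $L^p(\rn)$ with Shen's Riesz bound on the bounded Lipschitz domain $\Omega_{4R}$ via a cutoff decomposition, and to use $\tilde{\mathcal{A}}^p_0(\Omega)$ to absorb the residual non-decaying component that appears when $p\ge n$. By approximation one may work with $f\in C^\infty_{\mathrm c}(\rn)\cap \dot{W}^{1,p}_0(\Omega)$, so all quantities are a priori well defined.

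\textbf{Step 1 (decomposition).} Fix a smooth cutoff $\eta\in C^\infty(\rn)$ with $\eta\equiv 1$ on $B(0,2R)$ and $\supp\eta\subset B(0,4R)$, and write $f = \eta f + (1-\eta)f =: f_1 + f_2$. Here $f_1\in \dot{W}^{1,p}_0(\Omega_{4R})$ is supported in a bounded Lipschitz domain, while $f_2$ vanishes in a neighborhood of $\partial\Omega$ so that its zero extension $\tilde f_2$ lies in $\dot{W}^{1,p}(\rn)$. Correspondingly decompose the target inequality into an interior piece and an exterior piece, and aim to bound each by $\|g\|_{L^p(\Omega)}$ modulo a controlled commutator.

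\textbf{Step 2 (interior).} Apply Shen's Riesz bound on $\Omega_{4R}$, which is valid since $p<q(\mathcal{L}_{D,\Omega_R})$ (and the Lipschitz character of $\Omega_{4R}$ is the same as that of $\Omega_R$ up to the fixed bounded piece), to obtain
\begin{equation*}
\|\nabla f_1\|_{L^p(\Omega_{4R})} \le C\,\bigl\|\mathcal{L}_{D,\Omega_{4R}}^{1/2} f_1\bigr\|_{L^p(\Omega_{4R})}.
\end{equation*}
Using the subordination formula for fractional powers, rewrite $\mathcal{L}_{D,\Omega_{4R}}^{1/2}(\eta f) = \eta\, \mathcal{L}_D^{1/2} f + [\mathcal{L}_D^{1/2},\eta]f + (\mathcal{L}_{D,\Omega_{4R}}^{1/2}-\mathcal{L}_D^{1/2})(\eta f)$, where the third term involves comparison between the Dirichlet heat kernels on $\Omega$ and $\Omega_{4R}$ and is controllable by Gaussian off-diagonal bounds on the annulus $\{2R\le|x|\le 4R\}$.

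\textbf{Step 3 (exterior).} Apply the Riesz bound $\nabla \mathcal{L}^{-1/2}$ on $L^p(\rn)$, which is valid since $p<q(\mathcal{L})=p(\mathcal{L})$, to $\tilde f_2$:
\begin{equation*}
\|\nabla \tilde f_2\|_{L^p(\rn)} \le C\,\bigl\|\mathcal{L}^{1/2}\tilde f_2\bigr\|_{L^p(\rn)}.
\end{equation*}
Expand $\mathcal{L}^{1/2}\tilde f_2 = (1-\eta)\mathcal{L}_D^{1/2} f + [\mathcal{L}^{1/2},1-\eta]f + (\mathcal{L}^{1/2}-\mathcal{L}_D^{1/2})((1-\eta)f)$, the last difference again controlled via heat kernel comparison off of $\partial\Omega$.

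\textbf{Step 4 (gluing and kernel correction).} Summing Steps 2--3 yields $\|\nabla f\|_{L^p(\Omega)} \le C\|g\|_{L^p(\Omega)} + \mathcal{E}(f)$, where $\mathcal{E}(f)$ collects the commutator and heat-kernel-comparison terms, all of which are localized in $A_R:=\{2R\le |x|\le 4R\}$ and involve $f$ and $\nabla f$ paired with a bounded function. By Caccioppoli-type estimates for $\mathcal{L}$-solutions and the known Riesz bound in the ``good'' range $1<p_0<n$ (Theorem \ref{app-dirichlet}(ii), applied to $p_0<2$), one obtains
\begin{equation*}
\mathcal{E}(f) \le C\|g\|_{L^p(\Omega)} + C\|f-c_f\|_{L^{p_0}(A_R)}
\end{equation*}
for some $p_0\in(1,n)$ and $c_f\in\rr$. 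The constant $c_f$ represents the ``asymptotic value'' of $f$ at infinity; choosing $\phi\in\tilde{\mathcal{A}}^p_0(\Omega)$ with the same asymptotic value cancels $c_f$ and leaves a local remainder which, by Poincar\'e inequality on $A_R$, is controlled by $\|\nabla f\|_{L^{p_0}(A_R)} \le C\|g\|_{L^p(\Omega)}$.

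\textbf{Main obstacle.} The central difficulty is the $L^p$-control of the commutators $[\mathcal{L}^{1/2},\eta]f$ and $[\mathcal{L}_D^{1/2},\eta]f$ for $p\ge n$: the standard approach would bound these via Sobolev embedding on $f$ in the annulus $A_R$, which is unavailable in this regime. The resolution is that the only part of $f$ not controlled by $\nabla f$ in $L^{p_0}(A_R)$ is a constant—the ``value at infinity'' of $f$—which precisely parameterizes $\tilde{\mathcal{A}}^p_0(\Omega)$ (cf.\ Theorem \ref{c1.2}(i)). Identifying this one-dimensional obstruction with the correction space, and verifying that it exhausts the failure of the Riesz bound, is the novel content built on the structural results in \cite{jl22} and Theorem \ref{app-dirichlet}.
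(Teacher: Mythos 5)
Your proposal follows the ``gluing'' strategy (cut off near the obstacle, use Shen's bound on the bounded piece and the whole-space Riesz bound outside, control the error terms) that works for $p<n$, but the decisive step for $p\ge n$ is left as an assertion rather than a proof. Concretely: (a) the commutators $[\mathcal{L}_D^{1/2},\eta]f$ and the differences $(\mathcal{L}_{D,\Omega_{4R}}^{1/2}-\mathcal{L}_D^{1/2})(\eta f)$ are \emph{nonlocal}; Gaussian bounds give decay but do not reduce them to quantities ``localized in $A_R$'' paired with $f$ and $\nabla f$, and bounding them requires $\|f-c_f\|_{L^{p}(A_R)}$-type terms whose control by $\|\nabla f\|$ is exactly what fails when $p\ge n$ (no Sobolev embedding, and no uniform Poincar\'e constant after the choice of $c_f$ is coupled to the global behavior of $f$); (b) the claim $\|\nabla f\|_{L^{p_0}(A_R)}\le C\|g\|_{L^p(\Omega)}$ does not follow from the $L^{p_0}$-Riesz bound, since $f\in\dot W^{1,p}_0(\Omega)$ gives no a priori membership of $\nabla f$ or $g$ in $L^{p_0}(\Omega)$; (c) subtracting $\phi=c(u_0-1)\in\tilde{\mathcal{A}}^p_0(\Omega)$ to cancel $c_f$ in the local error changes $\nabla f$ by $c\nabla u_0$ everywhere, and you do not close the resulting circularity. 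Your ``Main obstacle'' paragraph correctly identifies the one-dimensional obstruction, but identifying it is precisely the content that must be proved, and no mechanism for proving it is supplied.

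The paper's proof avoids all of this by a different and much shorter route. It first establishes (Lemma \ref{l2.5}) that for $f\in\dot W_0^{-1,p}(\Omega)$ the Dirichlet problem $\mathcal{L}_D u=f$ has a unique solution in $\dot W^{1,p}_0(\Omega)/\tilde{\mathcal{A}}^p_0(\Omega)$ with $\inf_{\phi}\|\nabla u-\nabla\phi\|_{L^p}\le C\|f\|_{\dot W_0^{-1,p}}$; this is where the whole-space isomorphism for $\mathrm{div}$, the solvability on the bounded piece $\Omega_R$, and the Open Mapping Theorem enter, and where the kernel $\tilde{\mathcal{A}}^p_0(\Omega)$ appears naturally as the non-uniqueness of the exterior Dirichlet problem. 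Then the theorem follows in three lines: dualizing the reverse inequality \eqref{e1.0} gives $\|\mathcal{L}_D^{1/2}g\|_{\dot W_0^{-1,p}(\Omega)}\lesssim\|g\|_{L^p(\Omega)}$, whence
\begin{equation*}
\inf_{\phi\in\tilde{\mathcal{A}}^p_0(\Omega)}\|\nabla f-\nabla\phi\|_{L^p(\Omega)}
\lesssim\|\mathcal{L}_D f\|_{\dot W_0^{-1,p}(\Omega)}
=\bigl\|\mathcal{L}_D^{1/2}\bigl(\mathcal{L}_D^{1/2}f\bigr)\bigr\|_{\dot W_0^{-1,p}(\Omega)}
\lesssim\bigl\|\mathcal{L}_D^{1/2}f\bigr\|_{L^p(\Omega)}.
\end{equation*}
No commutators or heat-kernel comparisons are needed. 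If you want to salvage your approach you would essentially have to reprove Lemma \ref{l2.5} inside your error analysis; I recommend adopting the factorization-plus-duality argument instead.
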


%

To prove Theorem \ref{t1.1}, let us first begin with the following several lemmas.

Let $X$ be a Banach space and $Y$ a closed subspace of $X$. Denote by $X^\ast$ the \emph{dual space} of $X$.
Let
$$X^\ast\bot Y:=\{f\in X^\ast:\ \text{for\ all}\ x\in Y,\ \langle f,x\rangle\}=0,
$$
where $\langle\cdot,\cdot\rangle$ denotes the duality pairing between $X^\ast$ and $X$. That is,
$X^\ast\bot Y$ denotes the subspace of $X^\ast$ orthogonal to $Y$.

Meanwhile, for any given $m\in\nn\cup\{0\}$, we denote by $\mathcal{P}_m$ the space of polynomials on $\rn$
of degree less than or equal to $m$; if $m$ is a strictly negative integer, we set by convention $\mathcal{P}_m=\{0\}$.
Moreover, for any $s\in\rr$, denote by $\lfloor s\rfloor$ the maximal integer not more than $s$.

Then we have the following conclusion on the isomorphism property of the divergence operator $\mathrm{div}$
which was essentially obtained in \cite[Propositions 4.1 and 9.2]{agg94}.

\begin{lemma}\label{l2.0}
Let $n\ge2$, $p\in(1,\fz)$, and $p'\in(1,\fz)$ be given by $1/p+1/p'=1$.
Then the divergence operator $\mathrm{div}$ is an isomorphism from $L^{p}(\rn)/H_{p}$ to $\dot{W}^{-1,p}(\rn)\bot
\mathcal{P}_{\lfloor 1-n/p'\rfloor}$, where $H_{p}:=\{v\in L^{p}(\rn):\ \mathrm{div}(v)=0\ \text{in the sense of distributions}\}$.
\end{lemma}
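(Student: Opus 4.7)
The plan is to exhibit an explicit bounded inverse to $\mathrm{div}$ via convolution with the fundamental solution of the Laplacian on $\rn$, and thereby realize the isomorphism claimed. All quantitative bounds will reduce to $L^p$-boundedness of the second-order Riesz transforms on $\rn$, while the polynomial orthogonality condition encodes the precise growth/decay asymptotics needed to make the Newtonian potential well-defined in the appropriate homogeneous Sobolev framework used in \cite{agg94}.

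In the forward direction, for $v\in L^p(\rn)$ the distribution $\mathrm{div}(v)$ acts on $\phi\in C^\infty_{\rm c}(\rn)$ by $\langle\mathrm{div}(v),\phi\rangle=-\int_{\rn} v\cdot\nabla\phi\,dx$, which extends by density to $\dot{W}^{1,p'}(\rn)$ and yields $\|\mathrm{div}(v)\|_{\dot{W}^{-1,p}(\rn)}\le\|v\|_{L^p(\rn)}$. By the definition of $H_p$ the map descends to an injection on $L^p(\rn)/H_p$: if $[\mathrm{div}(v)]=0$ as a distribution, then $v\in H_p$ and $[v]=0$ in the quotient. To see that the image lies in $\dot{W}^{-1,p}(\rn)\perp\mathcal{P}_{\lfloor 1-n/p'\rfloor}$, note that $\mathcal{P}_{\lfloor 1-n/p'\rfloor}$ is precisely the polynomial subspace collapsing to zero in the homogeneous seminorm $\|\nabla\cdot\|_{L^{p'}(\rn)}$ (constants when $p'\ge n$, nothing when $p'<n$), so after extending $\mathrm{div}(v)$ to such polynomial test functions in the natural way the pairing vanishes by integration by parts.

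For surjectivity, given $T\in\dot{W}^{-1,p}(\rn)\perp\mathcal{P}_{\lfloor 1-n/p'\rfloor}$, I construct $v\in L^p(\rn)$ with $\mathrm{div}(v)=T$ by setting $v:=\nabla u$, where $u:=E\ast T$ is the Newtonian potential of $T$. Then $\Delta u=T$ in $\mathcal{D}'(\rn)$, hence $\mathrm{div}(\nabla u)=T$. The required estimate $\|\nabla u\|_{L^p(\rn)}\lesssim\|T\|_{\dot{W}^{-1,p}(\rn)}$ is obtained by writing $T=\partial_j g^j$ with $g^j\in L^p(\rn)$ (a standard realization of elements of $\dot{W}^{-1,p}(\rn)$), so that $\partial_i u=\partial_i\partial_j E\ast g^j$ is, up to a pointwise constant multiple, a linear combination of double Riesz transforms $R_iR_j g^j$; $L^p$-boundedness of the Riesz transforms then gives the bound. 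The orthogonality hypothesis $T\perp\mathcal{P}_{\lfloor 1-n/p'\rfloor}$ is exactly the moment compatibility that makes $E\ast T$ converge in the tempered distribution sense with no obstructing polynomial of positive degree left over, so that its gradient genuinely lands in $L^p(\rn)$.

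Combining these steps produces a continuous bijection from $L^p(\rn)/H_p$ onto $\dot{W}^{-1,p}(\rn)\perp\mathcal{P}_{\lfloor 1-n/p'\rfloor}$ whose inverse is controlled by the estimate above, establishing the isomorphism. The main technical obstacle, and the reason the statement relies on \cite{agg94}, is the bookkeeping around the quotient structures: one must correctly interpret $\dot{W}^{1,p'}(\rn)$---which is Banach only after modding out by the appropriate polynomial subspace---as the predual of the target, and match its annihilator with $\mathcal{P}_{\lfloor 1-n/p'\rfloor}$. The careful far-field asymptotic analysis of $E\ast T$ and the identification of precisely which polynomial moments of $T$ must vanish for the potential to produce an $L^p$-gradient is what occupies \cite[Propositions 4.1 and 9.2]{agg94}, and the lemma as stated is essentially a transcription of those results into our notation.
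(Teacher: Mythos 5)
The paper itself offers no proof of this lemma---it is quoted from Amrouche--Girault--Giroire \cite[Propositions 4.1 and 9.2]{agg94}---so your sketch can only be measured against the argument there, which is carried out in weighted Sobolev spaces. Measured that way, your surjectivity step is essentially circular. The ``standard realization'' $T=\partial_j g^j$ with $g^j\in L^p(\rn)$ and $\|g\|_{L^p(\rn)}\lesssim\|T\|_{\dot{W}^{-1,p}(\rn)}$ comes from Hahn--Banach applied to the isometric embedding $u\mapsto\nabla u$ of $\dot{W}^{1,p'}(\rn)$ into $L^{p'}(\rn;\rn)$; but once you have it, $v:=g$ already satisfies $\mathrm{div}\,v=T$ with the right norm bound, so the detour through $E\ast T$ and the double Riesz transforms $R_iR_j$ proves nothing new (it merely selects the gradient representative of the coset, which is irrelevant on the quotient $L^p(\rn)/H_p$). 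The one nontrivial assertion in the lemma is the identification of the range: why the image of $\mathrm{div}$ is exactly the annihilator of $\mathcal{P}_{\lfloor 1-n/p'\rfloor}$, i.e.\ which distributions extend to bounded functionals on the completion. You explicitly label this ``the main technical obstacle'' and defer it to \cite{agg94}, so the proposal does not actually contain an independent proof of the only part that requires one.

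There is also a concrete error in the way you describe the polynomial space: $\mathcal{P}_{\lfloor 1-n/p'\rfloor}$ is \emph{not} ``the polynomial subspace collapsing to zero in the homogeneous seminorm,'' since every constant has vanishing seminorm $\|\nabla\cdot\|_{L^{p'}(\rn)}$ for all $p'$ and $n$; that criterion cannot distinguish $p'\ge n$ from $p'<n$. The correct characterization, which is what makes the orthogonality condition non-vacuous precisely when $p'\ge n$, is that $\mathcal{P}_{\lfloor 1-n/p'\rfloor}$ consists of the polynomials that belong to the weighted space $W^{1,p'}_0(\rn)$ of \cite{agg94} (roughly, those $q$ with $q(x)/(1+|x|)$ in $L^{p'}(\rn)$, with a logarithmic correction at the critical exponent); on those the pairing with $\mathrm{div}\,v$ vanishes by density and integration by parts. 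It is worth noting that in every application made in the present paper one has $p>2$, hence $p'<2\le n$ and $\mathcal{P}_{\lfloor 1-n/p'\rfloor}=\{0\}$, so the delicate case your sketch glosses over never actually arises here.
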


\begin{lemma}\label{l2.1}
Let $p\in(2,\fz)$ and $f\in \dot{W}^{-1,p}(\rn)$ with compact support.
Then $f\in \dot{W}^{-1,2}(\rn)$ and there exists a positive constant $C$, depending only on $p$
and the support of $f$, such that
$$
\|f\|_{\dot{W}^{-1,2}(\rn)}\le C\|f\|_{\dot{W}^{-1,p}(\rn)}.
$$
\end{lemma}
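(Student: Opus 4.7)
The plan is to represent $f$ as a divergence via Lemma~\ref{l2.0}, localize with a cutoff, and then reduce matters to a pure $L^2$ estimate on the support. First note that for $p\in(2,\fz)$ and $n\ge2$ one has $p'<2\le n$, hence $1-n/p'<0$, so $\lfloor 1-n/p'\rfloor<0$ and thus $\mathcal{P}_{\lfloor 1-n/p'\rfloor}=\{0\}$. Lemma~\ref{l2.0} therefore produces a vector field $v\in L^p(\rn,\rn)$ with $\mathrm{div}(v)=f$ in $\mathcal{D}'(\rn)$ and $\|v\|_{L^p(\rn)}\le C\|f\|_{\dot W^{-1,p}(\rn)}$.

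Next I pick $R>0$ with $\supp f\subset B(0,R)$ and a cutoff $\chi\in C_{\rm c}^\fz(B(0,2R))$ equal to $1$ on an open neighborhood of $\overline{B(0,R)}$, and set $\wz v:=\chi v$, which is supported in $B(0,2R)$. Because $p>2$ and $\wz v$ has bounded support, H\"older's inequality yields $\wz v\in L^2(\rn,\rn)$ with $\|\wz v\|_{L^2(\rn)}\le |B(0,2R)|^{1/2-1/p}\|v\|_{L^p(\rn)}\le C_R\|f\|_{\dot W^{-1,p}(\rn)}$. The product rule gives $\mathrm{div}(\wz v)=\chi\,\mathrm{div}(v)+\nabla\chi\cdot v=f+g$, where $g:=\nabla\chi\cdot v\in L^p(\rn)$ is supported in the annulus $B(0,2R)\setminus B(0,R)$, whence also $g\in L^2(\rn)$ with $\|g\|_{L^2(\rn)}\le C_R\|f\|_{\dot W^{-1,p}(\rn)}$. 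The decomposition $f=\mathrm{div}(\wz v)-g$ then reduces the problem to bounding each piece in $\dot W^{-1,2}(\rn)$.

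The first piece is immediate: $\|\mathrm{div}(\wz v)\|_{\dot W^{-1,2}(\rn)}\le\|\wz v\|_{L^2(\rn)}$ from the definition of the negative norm. For the second piece, the Sobolev embedding $\dot W^{1,2}(\rn)\hookrightarrow L^{2^\ast}(\rn)$ (available when $n\ge3$) together with H\"older on the compact support of $g$ gives, for every $\phi\in C_{\rm c}^\fz(\rn)$,
$$
\left|\int_\rn g\phi\,dx\right|\le \|g\|_{L^{(2^\ast)'}(B(0,2R))}\|\phi\|_{L^{2^\ast}(\rn)}\le C_R\|g\|_{L^2(\rn)}\|\nabla\phi\|_{L^2(\rn)},
$$
so $\|g\|_{\dot W^{-1,2}(\rn)}\le C_R\|f\|_{\dot W^{-1,p}(\rn)}$. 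Adding the two contributions yields the lemma.

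The principal technical obstacle is the two-dimensional case, where $\dot W^{1,2}(\rr^2)$ does not embed into any $L^q(\rr^2)$; a compactly supported $L^2$ function is therefore not automatically an element of $\dot W^{-1,2}(\rr^2)$, and the duality step for $g$ above breaks down unless one has the cancellation $\int_{\rr^2}g\,dx=0$. By integration by parts this cancellation reads $\langle f,1\rangle=0$ in an appropriate limiting sense; enforcing it requires either modifying the divergence representative $v$ by an element of $H_p$, or exploiting the fact that the constants are approximable by $C_{\rm c}^\fz$ in the $\|\nabla\cdot\|_{L^2}$ seminorm on $\rr^2$ so that any continuous functional on $\dot W^{1,2}(\rr^2)$ already annihilates them. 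Combined with a Poincar\'e inequality on $B(0,2R)$, this supplies the $L^2$ replacement for the missing Sobolev embedding and completes the planar case.
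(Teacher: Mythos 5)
Your argument for $n\ge3$ is correct and genuinely different from the paper's. The paper never invokes the divergence representation: it tests $f$ directly against $g\psi_R$ for a cutoff $\psi_R$, estimates $\|\nabla(g\psi_R)\|_{L^{p'}}$ by H\"older on the bounded support (using $p'<2$) plus the Hardy inequality $\||x|^{-1}g\|_{L^2}\le C\|\nabla g\|_{L^2}$ from \cite{ckn84}. Your route instead imports the isomorphism of Lemma \ref{l2.0} to write $f=\divz v$, localizes $v$, and controls the commutator term $g=\nabla\chi\cdot v$ by the Sobolev embedding $\dot W^{1,2}\hookrightarrow L^{2^\ast}$. The paper's proof is more elementary (no appeal to the Amrouche--Girault--Giroire isomorphism), while yours makes the mechanism more transparent: the only obstruction to lowering the exponent is the boundary term produced by the cutoff, and that term is a compactly supported $L^2$ function, which lies in $\dot W^{-1,2}(\rn)$ exactly when $n\ge3$.

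The gap is the planar case, and your closing paragraph does not close it: saying that "any continuous functional on $\dot W^{1,2}(\rr^2)$ already annihilates constants" is circular (that is a property of elements of $\dot W^{-1,2}$, which is what you are trying to prove $g$ is), and modifying $v$ by an element of $H_p$ does not change $\nabla\chi\cdot v$ in a way you control. More importantly, the needed cancellation genuinely fails: $\int_{\rr^2}\nabla\chi\cdot v\,dx=-\langle f,\chi\rangle$, which is nonzero for instance when $f=\mathbf{1}_{B(0,1)}$. That example in fact shows the lemma as stated is false for $n=2$: $\mathbf{1}_{B(0,1)}\in\dot W^{-1,p}(\rr^2)$ for every $p>2$ by the Sobolev embedding $\dot W^{1,p'}(\rr^2)\hookrightarrow L^{2p'/(2-p')}(\rr^2)$, yet testing against logarithmic cutoffs $g_k$ (equal to $1$ on $B(0,1)$, vanishing outside $B(0,e^k)$, with $\|\nabla g_k\|_{L^2}^2\sim 1/k$) shows $\mathbf{1}_{B(0,1)}\notin\dot W^{-1,2}(\rr^2)$. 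So in dimension two the conclusion requires the moment condition $\langle f,1\rangle=0$ (as is indeed imposed in Lemma \ref{l2.2} where this lemma is applied), and with that condition your decomposition does go through since then $\int g=0$ and a Poincar\'e--Wirtinger argument on the annulus supplies the missing embedding. Note that the paper's own proof has the same blind spot: the Hardy inequality with weight $|x|^{-2}$ it invokes fails in the plane. You were right to flag the issue; the honest resolution is to add the hypothesis for $n=2$ rather than to claim the case is complete.
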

\begin{proof}
Suppose that $\supp f\subset B(0,R)$ and take a bump function $\psi_R$ such that $\psi_R=1$ on $B(0,R)$,
$\supp\psi_R\subset B(0,R+1)$ and $|\nabla \psi_R|\le 1$.
For any $g\in C^\infty_{\rm c}(\rn)$, one has
{ \begin{align*}
|\langle f,g\rangle|&=|\langle f,g\psi_R\rangle|
\le \|f\|_{\dot{W}^{-1,p}(\rn)} \|\nabla (g\psi_R)\|_{L^{p'}(\rn)}\\
&\le \|f\|_{\dot{W}^{-1,p}(\rn)}\left(\|\nabla g\|_{L^{p'}(B(0,R+1))}+ \|g\nabla\psi_R\|_{L^{p'}(\rn)}\right)\\
&\le  C(R)\|f\|_{\dot{W}^{-1,p}(\rn)} \left(\|\nabla g\|_{L^{2}(\rn)}+ \left(\int_{\rn}\frac{|g(x)|^2}{|x|^2} \,dx\right)^{1/2}\right)\\
&\le C(R)\|f\|_{\dot{W}^{-1,p}(\rn)}\|\nabla g\|_{L^{2}(\rn)},
\end{align*}}
where the last inequality follows from the Hardy inequality; see \cite{ckn84}.
The proof is completed by taking supremum over $g$ w.r.t.  the norm $\|\nabla g\|_{L^{2}(\rn)}$.
\end{proof}
\begin{lemma}\label{l2.2}
Let $n\ge2$ and $p\in(2,q(\mathcal{L}))$. Assume that $f\in \dot{W}^{-1,p}(\rn)$ has compact support.
When $n\ge3$, the problem
\begin{equation}\label{e2.1}
\mathcal{L}u=f\ \text{in}\ \rn
\end{equation}
has a unique solution $u$ in $\dot{W}^{1,2}(\rn)\cap\dot{W}^{1,p}(\rn)$  up to constants.

When $n=2$, if $f$ further satisfies the compatibility condition $\langle f,1\rangle=0$, then the problem \eqref{e2.1} has a unique solution
$u$ in $\dot{W}^{1,2}(\rr^2)\cap\dot{W}^{1,p}(\rr^2)$ up to constants.
\end{lemma}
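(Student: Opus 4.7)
The plan is to reduce to an $L^2$-variational problem on $\rn$ and then upgrade the regularity of the solution using the $L^p$-boundedness of the operator $T:=\nabla\mathcal{L}^{-1}\mathrm{div}$ that is built into the definition of $q(\mathcal{L})$. The crux is to represent $f$ as the divergence of a vector field lying simultaneously in $L^2(\rn)$ and $L^p(\rn)$.

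To produce such a representation, one checks that the orthogonality hypothesis in Lemma \ref{l2.0} is essentially vacuous in our setting. For $n\ge 3$ and any $p>2$, one has $\lfloor 1-n/p'\rfloor\le -1$ both at the exponent $2$ and at $p$, so no orthogonality is required; in particular Lemma \ref{l2.1} yields $f\in\dot{W}^{-1,2}(\rn)$. For $n=2$ and $p>2$ the same remains true at exponent $p$, but at exponent $2$ one has $\lfloor 1-n/2\rfloor=0$, which forces the orthogonality $\langle f,1\rangle=0$; this is precisely the compatibility condition assumed in the statement. Under these hypotheses, Lemma \ref{l2.0} provides vector fields in $L^2(\rn)$ and $L^p(\rn)$ whose divergence equals $f$. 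A single explicit representative can be obtained from the Newtonian (respectively logarithmic) potential: $F:=\nabla(-\Delta)^{-1}f$ decays like $|x|^{1-n}$ at infinity for $n\ge 3$, and like $|x|^{-2}$ for $n=2$ under the compatibility condition, so $F\in L^2(\rn)\cap L^p(\rn)$ and $\mathrm{div}(F)=f$.

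Given such an $F$, Lax--Milgram applied to the bilinear form $(u,\phi)\mapsto\int_{\rn}A\nabla u\cdot\nabla\phi\,dx$ on $\dot{W}^{1,2}(\rn)$ (understood modulo constants when $n=2$) produces a unique $u\in\dot{W}^{1,2}(\rn)$, up to constants, with $\mathcal{L}u=\mathrm{div}(F)=f$; equivalently, $\nabla u=-T(F)$. Since $p\in(q(\mathcal{L})',q(\mathcal{L}))$, the operator $T$ is bounded on $L^p(\rn)$, and its $L^2$- and $L^p$-realizations agree on the intersection $L^2(\rn)\cap L^p(\rn)$ by a standard density argument. Consequently $\nabla u\in L^p(\rn)$, which gives the desired solution in $\dot{W}^{1,2}(\rn)\cap\dot{W}^{1,p}(\rn)$.

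Uniqueness is then straightforward: if $u_1,u_2$ are two solutions in $\dot{W}^{1,2}(\rn)\cap\dot{W}^{1,p}(\rn)$, then $w:=u_1-u_2$ satisfies $\mathcal{L}w=0$ with $\nabla w\in L^2(\rn)$, and testing against $w$ in $\dot{W}^{1,2}(\rn)$ together with uniform ellipticity forces $\nabla w=0$, so $w$ is constant. The main obstacle is obtaining a single $F$ lying in $L^2(\rn)\cap L^p(\rn)$ with $\mathrm{div}(F)=f$ and correctly identifying the variational solution with the output of the bounded operator $T$ on $L^p(\rn)$; this is where the two-dimensional compatibility condition enters essentially, both through Lemma \ref{l2.1} and through the orthogonality clause of Lemma \ref{l2.0} at exponent $2$.
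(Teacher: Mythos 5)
Your proposal is correct and follows essentially the same route as the paper: Lemma \ref{l2.1} to place $f$ in $\dot{W}^{-1,2}(\rn)$, Lemma \ref{l2.0} to realize $f$ as a divergence (with the $n=2$ compatibility condition arising exactly from the orthogonality clause at exponent $2$), and the $L^p$-boundedness of $\nabla\mathcal{L}^{-1}\mathrm{div}$ from the definition of $q(\mathcal{L})$ to upgrade the Lax--Milgram solution to $\dot{W}^{1,p}(\rn)$. You are in fact slightly more careful than the paper, which does not spell out the need for a single vector field $F\in L^2(\rn)\cap L^p(\rn)$ nor the consistency of the two realizations of the operator on that intersection.
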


\begin{proof}
By the assumption that $f\in \dot{W}^{-1,p}(\rn)$ has compact support and Lemma \ref{l2.1},
we conclude that $f\in \dot{W}^{-1,2}(\rn)$.

When $n\ge3$, from Lemma \ref{l2.0}, the definition of the interval $(q_-(\mathcal{L}),q(\mathcal{L}))$,
and the assumption $p\in(2,q(\mathcal{L}))$, it follows that the equation \eqref{e2.1} has a unique solution
$u\in \dot{W}^{1,2}(\rn)\cap\dot{W}^{1,p}(\rn)$ up to constants.

Moreover, when $n=2$, by the known fact that constants belong to $\dot{W}^{1,2}(\rr^2)$,
we find that the duality pairing $\langle f,1\rangle$ makes sense. Therefore, in this case,
using Lemma \ref{l2.0} and  the assumption $p\in(2,q(\mathcal{L}))$ again, we conclude that
the problem \eqref{e2.1} has a unique solution
$u\in \dot{W}^{1,2}(\rr^2)\cap\dot{W}^{1,p}(\rr^2)$ up to constants. This finishes the proof of Lemma \ref{l2.2}.
\end{proof}

\begin{lemma}\label{l2.3}
Let $n\ge2$ and $\boz\subset\rn$ be an exterior Lipschitz domain. Take a large $R\in(0,\fz)$ such that $\boz^c\subset
B(0,R)$ and let $\boz_R:=\boz\cap B(0,R)$. Let $p\in(2, \min\{q(\mathcal{L}),q(\mathcal{L}_{D,\boz_R})\})$.
Assume that $f\in \dot{W}_0^{-1,p}(\boz)$ has compact support and its support is contained in $B(0,R)$.
Then the Dirichlet problem
\begin{equation}\label{e2.2}
\lf\{\begin{array}{ll}
-\divz(A\nabla u)=f\ \ &\text{in}\ \boz,\\
u=0\  &\text{on}\  \partial\boz
\end{array}\r.
\end{equation}
has a unique solution $u$ in $\dot{W}_0^{1,2}(\boz)\cap\dot{W}_0^{1,p}(\boz)$.
\end{lemma}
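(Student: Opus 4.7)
My plan is to first obtain a unique solution $u\in\dot{W}_0^{1,2}(\boz)$ via Lax--Milgram, and then upgrade $u$ to $\dot{W}_0^{1,p}(\boz)$ by splitting the problem through a cutoff into a near-boundary piece handled by the $L^p$-theory of $\mathcal{L}_{D,\boz_R}$ and a far-field piece handled by Lemma~\ref{l2.2} on $\rn$. Uniqueness in $\dot{W}_0^{1,2}(\boz)\cap\dot{W}_0^{1,p}(\boz)$ will then reduce immediately to the standard $\dot{W}_0^{1,2}$-uniqueness. For the $L^2$-existence step, the bump-function argument in the proof of Lemma~\ref{l2.1} adapts directly: since $\supp f$ is a compact subset of $B(0,R)$, one tests against smooth functions multiplied by a cutoff localised near $\supp f$ and invokes the Hardy inequality to see that $f\in\dot{W}_0^{-1,2}(\boz)$. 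Uniform ellipticity of $A$ then makes the bilinear form $(A\nabla u,\nabla v)_\boz$ coercive on the Hilbert space $\dot{W}_0^{1,2}(\boz)$, producing a unique $u$.

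Because $\supp f$ is compact and $B(0,R)$ is open, fix $R_0\in(0,R)$ with $\boz^c\cup\supp f\subset B(0,R_0)$, and choose a smooth cutoff $\chi$ with $\chi\equiv 1$ on $B(0,R_0)$ and $\supp\chi\subset B(0,(R_0+R)/2)$. Split $u=v_1+v_2$ with $v_1:=\chi u$ and $v_2:=(1-\chi)u$ (the latter extended by zero across $\partial\boz$; this is consistent because $\supp v_2\subset\boz$). Using the symmetry of $A$ together with the identity $\divz(uA\nabla\chi)=A\nabla u\cdot\nabla\chi+u\,\divz(A\nabla\chi)$, and using $\supp f\subset\{\chi=1\}$, a direct computation yields
\begin{equation*}
-\divz(A\nabla v_1)=\chi f-A\nabla u\cdot\nabla\chi-\divz(uA\nabla\chi)\quad\text{in}\ \boz_R,\qquad v_1|_{\partial\boz_R}=0,
\end{equation*}
\begin{equation*}
-\divz(A\nabla v_2)=A\nabla u\cdot\nabla\chi+\divz(uA\nabla\chi)\quad\text{in}\ \rn.
\end{equation*}
All correction terms are supported in the compact annulus $K:=\supp(\nabla\chi)\subset\boz$, which lies in the interior of $\boz$ and is disjoint from $\supp f$. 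On a neighbourhood of $K$ we have $\mathcal{L}u=0$, so interior $L^p$-regularity for elliptic operators with $L^\infty$-coefficients (a Caccioppoli--Meyers iteration combined with the $L^p$-boundedness of $\nabla\mathcal{L}^{-1}\divz$ on $\rn$, valid for $p<q(\mathcal{L})$) gives $u\in W^{1,p}(K)$. Consequently both correction terms are $\dot{W}^{-1,p}$-distributions with compact support. Applying the $L^p$-theory of $\mathcal{L}_{D,\boz_R}$ (valid since $p<q(\mathcal{L}_{D,\boz_R})$) to the $v_1$-equation gives $v_1\in W_0^{1,p}(\boz_R)$, and applying Lemma~\ref{l2.2} to the $v_2$-equation produces a $\dot{W}^{1,p}(\rn)$-solution which, by uniqueness in $\dot{W}^{1,2}(\rn)$ modulo constants, must coincide with $v_2$ up to a constant. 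Summing $v_1$ (extended by zero to $\boz$) and $v_2$ yields $u\in\dot{W}_0^{1,p}(\boz)$ with the expected norm bound.

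The main obstacle is the interior bootstrap from $\nabla u\in L^2_\loc$ to $\nabla u\in L^p_\loc$ on $K$ using only $p<q(\mathcal{L})$: one iterates the identity $\mathcal{L}(\psi u)=-A\nabla u\cdot\nabla\psi-\divz(uA\nabla\psi)$ for a nested family of interior cutoffs $\psi$, using Sobolev embedding and Lemma~\ref{l2.2} at each stage to gain integrability, with the compatibility condition automatic at every step because $\psi$ is supported in the interior where $\mathcal{L}u=0$, so testing the equation against the constant $1$ gives zero. A secondary technicality arises for $n=2$, where Lemma~\ref{l2.2} applied to the $v_2$-equation requires $\int h_2=0$: integrating $-\divz(A\nabla u)=f$ over $\boz$ against $1$, using $u|_{\partial\boz}=0$ and the vanishing of the flux at infinity (from $\nabla u\in L^2(\boz)$ and the exterior geometry), one recovers the identity $\langle f,1\rangle+\int_{\partial\boz}A\nabla u\cdot\nu\,d\sigma=0$, which is precisely the required compatibility once the extension-by-zero identity for $\int h_2$ is unwound.
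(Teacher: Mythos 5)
Your proposal is correct and follows the same basic strategy as the paper: Lax--Milgram for the $\dot{W}_0^{1,2}$ solution, a cutoff decomposition into a far-field piece solved on all of $\rn$ via the $L^p$-boundedness of $\nabla\mathcal{L}^{-1}\divz$ (Lemma \ref{l2.2}) and a near-boundary piece solved on the bounded domain via $q(\mathcal{L}_{D,\boz_R})$ (Lemma \ref{l2.4}), together with a Liouville-type uniqueness to match the pieces. The differences are organizational: the paper first runs the decomposition under the extra restriction $p<\frac{2n}{n-2}$ (needed so that the commutator terms $A\nabla u\cdot\nabla\fai_2\in L^2$ and $u\in L^p_{\loc}$ land in $W^{-1,p}$ by Sobolev embedding) and removes that restriction only at the end by a bootstrap, and it treats the near-boundary piece as the original equation on $\boz_{R+1}$ with inhomogeneous data $u_2$ on the outer sphere; you instead front-load the bootstrap as an interior Meyers-type regularity statement on the annulus $\supp(\nabla\chi)$, where $\mathcal{L}u=0$, which makes all commutator terms genuinely compactly supported $\dot{W}^{-1,p}$ objects in one pass and lets you pose the near-boundary problem with homogeneous boundary data. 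The two bootstraps are the same iteration, just localized differently, so neither version gains generality; your handling of the $n=2$ compatibility condition $\langle h_2,1\rangle=0$ (most cleanly seen by testing the equation against $1-\chi\in\dot{W}_0^{1,2}(\boz)$, which vanishes near $\supp f$) is a point the paper's proof leaves implicit.
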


Let $s\in(0,1)$ and $p\in(1,\fz)$. For the exterior Lipschitz domain (or the bounded Lipschitz domain) $\boz$ of $\rn$,
denote by $W^{s,p}(\partial \boz)$ the \emph{fractional Sobolev space on $\partial \boz$} (see, for instance, \cite[Section 2.4.3]{j13}
for its definition). To show Lemma \ref{l2.3}, we need the following conclusion.

\begin{lemma}\label{l2.4}
Let $n\ge2$ and $O\subset\rn$ be a bounded Lipschitz domain. Let $p\in(2, q(\mathcal{L}_{D,O}))$.
Assume that $f\in \dot{W}_0^{-1,p}(O)$ and $g\in W^{1/p',p}(\partial O)$.
Then the Dirichlet problem
\begin{equation}\label{e2.3}
\lf\{\begin{array}{ll}
-\divz(A\nabla v)=f\ \ &\text{in}\ O,\\
v=g\  &\text{on}\  \partial O
\end{array}\r.
\end{equation}
has a unique solution $v$ in $W^{1,p}(O)$.
\end{lemma}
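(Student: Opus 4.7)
The plan is to reduce the inhomogeneous Dirichlet problem to one with zero boundary data by lifting $g$, and then to invoke the $L^p$-solvability encoded in the hypothesis $p\in(2,q(\mathcal{L}_{D,O}))$.

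First I would invoke the standard trace/extension theorem for bounded Lipschitz domains: the trace operator $W^{1,p}(O)\to W^{1/p',p}(\partial O)$ has a bounded right inverse, so one can choose $G\in W^{1,p}(O)$ with $G|_{\partial O}=g$ and $\|G\|_{W^{1,p}(O)}\lesssim \|g\|_{W^{1/p',p}(\partial O)}$. Setting $w:=v-G$ turns \eqref{e2.3} into
\[
-\divz(A\nabla w)=f+\divz(A\nabla G)\ \text{in}\ O,\qquad w=0\ \text{on}\ \partial O.
\]
Because $A\in L^\infty$ and $\nabla G\in L^p(O)$, the modified right-hand side $F:=f+\divz(A\nabla G)$ lies in $\dot{W}_0^{-1,p}(O)$, which on the bounded domain $O$ coincides with $W_0^{-1,p}(O)$ via the Poincar\'e inequality.

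Next I would solve the zero-boundary problem for $w$. By a standard Hahn-Banach/Riesz representation argument (the Lipschitz-domain counterpart of Lemma \ref{l2.0}), one can write $F=-\divz\vec H$ for some $\vec H\in L^p(O)^n$ with $\|\vec H\|_{L^p(O)}\lesssim \|F\|_{W_0^{-1,p}(O)}$. Since $p\in(2,q(\mathcal{L}_{D,O}))$, Definition \ref{d1.1} ensures that $\nabla\mathcal{L}_{D,O}^{-1}\divz$ is bounded on $L^p(O)$, so setting $w:=-\mathcal{L}_{D,O}^{-1}\divz\vec H$ gives a unique element of $W_0^{1,p}(O)$ with $\|\nabla w\|_{L^p(O)}\lesssim \|\vec H\|_{L^p(O)}$. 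Then $v:=w+G\in W^{1,p}(O)$ solves \eqref{e2.3} with the natural a priori estimate. Uniqueness is immediate: the difference of two solutions lies in $W_0^{1,p}(O)$ and is annihilated by $\mathcal{L}_{D,O}$, and the $L^p$-boundedness of $\nabla\mathcal{L}_{D,O}^{-1}\divz$ forces $\mathcal{L}_{D,O}\colon W_0^{1,p}(O)\to W_0^{-1,p}(O)$ to be injective.

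The step most deserving of care is the representation $F=-\divz\vec H$ with $L^p$ control; on a bounded Lipschitz domain this is routine (via the Bogovskii operator, or a Hahn-Banach extension followed by Riesz representation on $L^{p'}(O)^n$), but since Lemma \ref{l2.0} in the excerpt is stated only on $\rn$, the correct reference must be quoted explicitly. Apart from this bookkeeping point, I do not expect any real obstacle.
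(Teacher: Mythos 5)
Your proposal is correct and follows essentially the same route as the paper: lift the boundary datum by the converse trace theorem, subtract the lifting to reduce to a zero-boundary problem with right-hand side $f+\divz(A\nabla G)\in W_0^{-1,p}(O)$, and solve that problem using the solvability encoded in $p\in(2,q(\mathcal{L}_{D,O}))$; the paper's uniqueness step is the same observation that the difference of two solutions is an $\mathcal{L}_{D,O}$-null function in $W_0^{1,p}(O)$ (which, since $p>2$ and $O$ is bounded, embeds into $W_0^{1,2}(O)$ where the energy method applies). The representation $F=-\divz\vec H$ that you flag is indeed the routine identification of $W_0^{-1,p}(O)$ with divergences of $L^p$ fields and is left implicit in the paper as well.
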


\begin{proof}
We first prove that there exists a solution $v\in W^{1,p}(O)$ for the problem \eqref{e2.3}.
Indeed, by $g\in W^{1/p',p}(\partial O)$ and the converse trace theorem for Sobolev spaces
(see, for instance, \cite[Section 2.5.7, Theorem 5.7]{j13}), we find that there exists
a function $w_1\in W^{1,p}(O)$ such that $w_1=g$ on $\partial O$. Moreover, it is easy
to find that $-\mathrm{div}(A\nabla w_1)\in W^{-1,p}_0(O)$. Furthermore, from the assumption that
$p\in(2, q(\mathcal{L}_{D,O}))$, it follows that there exists a unique $w_2\in W^{1,p}_0(O)$
satisfying
\begin{equation*}
\lf\{\begin{array}{ll}
-\mathrm{div}(A\nabla w_2)=f+\mathrm{div}(A\nabla w_1)\ \ &\text{in}\ O,\\
w_2=0\  &\text{on}\  \partial O.
\end{array}\r.
\end{equation*}
Thus, $v:=w_1+w_2\in W^{1,p}(O)$ is a solution of the problem \eqref{e2.3}.

Now, we show the solution of \eqref{e2.3} is unique. Assume that $v_1,v_2\in W^{1,p}(O)$ are solutions of \eqref{e2.3}.
Then $\mathrm{div}(A\nabla (v_1-v_2))=0$ in $O$ and $v_1-v_2=0$ on $\partial O$.
Thus, $v_1=v_2$ almost everywhere in $O$.
This finishes the proof of Lemma \ref{l2.4}.
\end{proof}

Now, we prove Lemma \ref{l2.3} by using Lemmas \ref{l2.2} and \ref{l2.4}.

\begin{proof}[Proof of Lemma \ref{l2.3}]
By the assumption that $f\in \dot{W}_0^{-1,p}(\boz)$ has compact support and an argument similar to the proof of
Lemma \ref{l2.1}, we conclude that $f\in \dot{W}_0^{-1,2}(\boz)$, which, combined with the Lax--Milgram theorem,
further implies that the Dirichlet problem \eqref{e2.2} has a unique solution $u\in \dot{W}_0^{1,2}(\boz)$.

Next, we show $u\in \dot{W}_0^{1,p}(\boz)$. We first assume that
$$p\in\lf(2,\min\lf\{q(\mathcal{L}),q(\mathcal{L}_{D,\boz_R}),\frac{2n}{n-2}\r\}\r)$$
when $n\ge3$ or $p\in(2,\min\{q(\mathcal{L}),q(\mathcal{L}_{D,\boz_R})\})$ when $n=2$.

Let $\fai_1,\fai_2\in C^\fz(\rn)$ satisfy $0\le\fai_1,\fai_2\le1$, $\supp(\fai_1)\subset B(0,R+1)$, $\fai_1\equiv1$ on $B(0,R)$,
and $\fai_1+\fai_2\equiv1$ in $\rn$. Extend $u$ by zero in $\boz^c$ and let $u=u_1+u_2$,
where $u_1:=u\fai_1$ and $u_2:=u\fai_2$. Then
$$
\mathrm{div}(A\nabla u_2)=\mathrm{div}(A\nabla(u\fai_2))\ \text{in}\ \rn.
$$
From $u\in\dot{W}_0^{1,2}(\boz)$ and the assumptions that $\fai_2\in C^\fz(\rn)$ and $\fai_2\equiv1$ on $\rn\backslash B(0,R+1)$,
we infer that $u\fai_2\in W^{1,2}(\boz\cap B(0,R+1))$ and hence { $\nabla(u\fai_2)\in L^2(\rn)$}, which, together with the assumption
$A\in L^\fz(\rn;\rr^{n\times n})$, further implies that { $A\nabla(u\fai_2)\in L^2(\rn)$}.

Furthermore, it is straight to see that
$$
\mathrm{div}(A\nabla(u\fai_2))=f\fai_2-A\nabla u\cdot\nabla\fai_2-\mathrm{div}(uA\nabla\fai_2):=g
$$
in the weak sense. By the assumption $f\in\dot{W}_0^{-1,p}(\boz)$, we conclude that $f\fai_2\in\dot{W}_0^{-1,p}(\boz)$.
Meanwhile, from $u\in\dot{W}_0^{1,2}(\boz)$ and the assumptions that $\fai_2\in C^\fz(\rn)$, $0\le\fai_2\le1$, and $\fai_2\equiv1$ on
$\rn\backslash B(0,R+1)$, we deduce that $A\nabla u\cdot\nabla\fai_2\in L^2(\boz\cap B(0,R+1))$,
which, combined with the Sobolev inequality, further implies that $A\nabla u\cdot\nabla\fai_2\in W^{-1,p}(B(0,R+1))$. Moreover,
by $u\in\dot{W}^{1,2}(\boz)$, we find that $u\in L^{p}_\loc(\boz)$, which, together with the assumptions that $\fai_2\in C^\fz(\rn)$
and $\supp(\nabla\fai_2)\subset B(0,R+1)$, further implies that { $Au\nabla\fai_2\in L^{p}(\boz\cap B(0,R+1))$} and hence
$\mathrm{div}(Au\nabla\fai_2)\in W^{-1,p}_0(\boz\cap B(0,R+1))$. Thus, we have $g\in W^{-1,p}_0(\boz\cap B(0,R+1))$.
Extend $g$ by zero in $\boz^c$. Then $g\in \dot{W}^{-1,p}(\rn)$. Therefore,
$$
-\mathrm{div}(A\nabla u_2)=g\ \text{in} \ \rn,
$$
which, combined with Lemma \ref{l2.0} and $p\in(2,q(\mathcal{L}))$, further implies that
$u_2\in\dot{W}^{1,p}(\rn)$.

Furthermore, from $u=u_2$ on $\partial B(0,R+1)$ and the trace theorem for Sobolev spaces (see, for instance,
\cite[Section 2.5.4, Theorem 5.5]{j13}), it follows that $u\in W^{1/p',p}(\partial B(0,R+1))$.
Meanwhile, we have
\begin{equation}\label{e2.4}
\lf\{\begin{array}{ll}
-\mathrm{div}(A\nabla u)=f\ \ &\text{in}\ \boz_{R+1},\\
u=0\  &\text{on}\  \partial\boz,\\
u=u_2\  &\text{on}\  \partial B(0,R+1),\\
\end{array}\r.
\end{equation}
where $\boz_{R+1}:=\boz\cap B(0,R+1)$.
By the assumption $p<q(\mathcal{L}_{D,\boz_{R+1}})$ and Lemma \ref{l2.4}, we conclude that the problem
\eqref{e2.4} has a unique solution in $W^{1,p}(\boz_{R+1})$, which further implies that $u\in W^{1,p}(\boz_{R+1})$.
From this, $u\in \dot{W}^{1,2}_0(\boz)$, $u_2\in\dot{W}^{1,p}(\rn)$, and the fact that $u=u_2$ on
$\rn\backslash B(0,R+1)$, we deduce that $u\in\dot{W}^{1,p}(\boz)$ with any given
$$p\in\lf(2,\min\lf\{q(\mathcal{L}),q(\mathcal{L}_{D,\boz_{R+1}}),\frac{2n}{n-2}\r\}\r)$$
when $n\ge3$ or any given $p\in(2,\min\{q(\mathcal{L}),q(\mathcal{L}_{D,\boz_{R+1}})\})$
when $n=2$. Then, using an bootstrap argument (see, for instance, \cite[p.\,63]{agg97}), we find that $u\in
\dot{W}^{1,p}_0(\boz)$ with any given $p\in(2,\min\{q(\mathcal{L}),q(\mathcal{L}_{D,\boz_{R+1}})\})$.
This finishes the proof of Lemma \ref{l2.3}.
\end{proof}

\begin{lemma}\label{l2.5}
Let $n\ge2$ and $\boz\subset\rn$ be an exterior Lipschitz domain. {Take a large $R\in(0,\fz)$ such that $\boz^c\subset B(0,R)$ and let $\boz_R:=\boz\cap B(0,R)$.
Assume that $\min\{q(\mathcal{L}),q(\mathcal{L}_{D,\boz_R})\}>n$. Let $p>2$ and $p\in [n,\min\{q(\mathcal{L}),q(\mathcal{L}_{D,\boz_R})\})$.
Assume further that $f\in \dot{W}_0^{-1,p}(\boz)$ and}
$$\tilde{\mathcal{A}}^p_0(\boz)=\lf\{w\in\dot{W}_0^{1,p}(\boz):\ \mathcal{L}_Dw=0\r\}.$$
Then the problem
\begin{equation}\label{e2.5}
\lf\{\begin{array}{ll}
-\divz(A\nabla u)=f\ \ &\text{in}\ \boz,\\
u=0\  &\text{on}\  \partial\boz
\end{array}\r.
\end{equation}
has a unique solution $u$ in $\dot{W}_0^{1,p}(\boz)/\tilde{\mathcal{A}}^p_0(\boz)$ and there exists a positive constant independent of $f$ such that
\begin{equation*}
\inf_{\phi\in \tilde{\mathcal{A}}^p_0(\boz)}\lf\|\nabla u-\nabla\phi\r\|_{L^p(\boz)}\le C\|f\|_{\dot{W}_0^{-1,p}(\boz)}.
\end{equation*}
\end{lemma}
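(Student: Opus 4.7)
The plan is to establish existence with the claimed quotient-norm bound by decomposing $f$ into a compactly supported piece (handled by Lemma \ref{l2.3}) and a piece supported far from $\partial\boz$ (handled by the $L^p(\rn)$-boundedness of $\nabla\mathcal{L}^{-1}\mathrm{div}$, available since $p\in(q(\mathcal{L})',q(\mathcal{L}))$), and then patching near $\partial\boz$ via a second cutoff. Uniqueness of $u$ modulo $\tilde{\mathcal{A}}^p_0(\boz)$ is automatic from the definition of that space: any two solutions in $\dot{W}_0^{1,p}(\boz)$ of \eqref{e2.5} differ by an element of $\tilde{\mathcal{A}}^p_0(\boz)$.

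For existence, I would represent $f=-\mathrm{div}\vec{g}$ with $\vec{g}\in L^p(\boz;\rn)$ and $\|\vec{g}\|_{L^p(\boz)}\lesssim\|f\|_{\dot{W}_0^{-1,p}(\boz)}$ (Riesz representation for functionals on $\dot{W}_0^{1,p'}(\boz)$). Choose $R_0<R$ with $\boz^c\subset B(0,R_0)$ (enlarging $R$ at the outset if necessary, which is admissible since the hypothesis $q(\mathcal{L}_{D,\boz_R})>n$ is preserved), and pick two cutoffs $\chi,\zeta\in C^\fz_{\rm c}(\rn)$, both supported in $B(0,R)$, such that $\chi\equiv 1$ on $B(0,R_0+1)$ and $\chi\equiv 0$ outside $B(0,R_0+2)$, while $\zeta\equiv 0$ on $B(0,R_0)$ and $\zeta\equiv 1$ outside $B(0,R_0+1)$. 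Split $\vec{g}=\chi\vec{g}+(1-\chi)\vec{g}$, giving $f=f_1+f_2$ with $f_1:=-\mathrm{div}(\chi\vec{g})$ compactly supported in $B(0,R)$ and $f_2:=-\mathrm{div}((1-\chi)\vec{g})$ supported in $\rn\setminus B(0,R_0+1)\subset\boz$. Lemma \ref{l2.3} applied to $f_1$ produces $u_1\in\dot{W}_0^{1,p}(\boz)$ with $\mathcal{L}_D u_1=f_1$ and $\|\nabla u_1\|_{L^p(\boz)}\lesssim\|\vec{g}\|_{L^p(\boz)}$. For $f_2$, extending $(1-\chi)\vec{g}$ by zero to $\rn$ and invoking the $L^p(\rn)$-boundedness of $\nabla\mathcal{L}^{-1}\mathrm{div}$ yields $\tilde{u}_2\in\dot{W}^{1,p}(\rn)$ (unique modulo constants) solving $\mathcal{L}\tilde{u}_2=f_2$ with $\|\nabla\tilde{u}_2\|_{L^p(\rn)}\lesssim\|\vec{g}\|_{L^p(\boz)}$; I normalize the representative of $\tilde{u}_2$ so that its mean over the annulus $\{R_0\le|x|\le R_0+1\}$ vanishes, so that Poincaré on that annulus gives $\|\tilde{u}_2\|_{L^p(\{R_0\le|x|\le R_0+1\})}\lesssim\|\nabla\tilde{u}_2\|_{L^p(\rn)}$.

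To turn $\tilde{u}_2$ into an element of $\dot{W}_0^{1,p}(\boz)$ solving $\mathcal{L}_D\cdot =f_2$, I look for the outer piece in the form $\zeta\tilde{u}_2+v$. Since $\zeta$ vanishes on a neighbourhood of $\partial\boz$ and $\supp(f_2)\subset\{\zeta\equiv 1\}$, a direct commutator computation gives
\[
\mathcal{L}_D(\zeta\tilde{u}_2)=f_2-A\nabla\tilde{u}_2\cdot\nabla\zeta-\mathrm{div}(\tilde{u}_2\,A\nabla\zeta)\ \text{in}\ \boz,
\]
so $v$ must solve the Dirichlet problem with right-hand side $A\nabla\tilde{u}_2\cdot\nabla\zeta+\mathrm{div}(\tilde{u}_2\,A\nabla\zeta)$, compactly supported in the annulus $\{R_0<|x|<R_0+1\}\subset\boz$. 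By the Poincaré step, this source lies in $\dot{W}_0^{-1,p}(\boz)$ with norm $\lesssim\|\vec{g}\|_{L^p(\boz)}$, and one more application of Lemma \ref{l2.3} supplies $v\in\dot{W}_0^{1,p}(\boz)$ with $\|\nabla v\|_{L^p(\boz)}\lesssim\|\vec{g}\|_{L^p(\boz)}$. Setting $u:=u_1+\zeta\tilde{u}_2+v$ yields $u\in\dot{W}_0^{1,p}(\boz)$, $\mathcal{L}_D u=f$, and $\|\nabla u\|_{L^p(\boz)}\lesssim\|f\|_{\dot{W}_0^{-1,p}(\boz)}$, and combining with uniqueness modulo $\tilde{\mathcal{A}}^p_0(\boz)$ gives the claimed quotient estimate.

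The main technical obstacle is the compatibility of the two cutoffs: one needs $\supp(f_2)$ to lie in $\{\zeta\equiv 1\}$ (so that the $f_2$-term is absorbed and only a thin annular commutator remains), $\zeta\equiv 0$ near $\partial\boz$ (so that $\zeta\tilde{u}_2$ vanishes on $\partial\boz$), and the additive constant in $\tilde{u}_2$ to be fixed so that Poincaré controls the divergence-type commutator $\mathrm{div}(\tilde{u}_2\,A\nabla\zeta)$. The case $n=2$ does not cause extra difficulty because the whole-space step is routed through $\nabla\mathcal{L}^{-1}\mathrm{div}$ on $L^p(\rn)$ rather than through Lemma \ref{l2.2}, thereby bypassing the compatibility condition $\langle f_2,1\rangle=0$, which in any case holds automatically for a divergence of an $L^p$ field. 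Finally, the ambiguity modulo constants in the representative of $\tilde{u}_2$ translates into the expected ambiguity of $u$ modulo $\tilde{\mathcal{A}}^p_0(\boz)$: for each constant $c$, one checks that $c\zeta+v_c$ (where $v_c$ is the Lemma \ref{l2.3}-correction for the source $-c\,\mathcal{L}\zeta$) satisfies $\mathcal{L}_D(c\zeta+v_c)=0$ in $\boz$ with zero boundary data and so lies in $\tilde{\mathcal{A}}^p_0(\boz)$, matching the quotient structure of the statement.
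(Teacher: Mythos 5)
Your construction is sound and reaches the same conclusion, but it is organized differently from the paper's proof. The paper writes $f=\mathrm{div}F$, extends $F$ by zero, solves $\mathcal{L}w=\mathrm{div}F$ in all of $\rn$ at once, and then removes the nonzero trace of $w$ on $\partial\boz$ by solving a homogeneous equation with boundary data $-w$ (itself built from a bounded-domain Dirichlet problem via Lemma \ref{l2.4} plus a compactly supported correction via Lemma \ref{l2.3}); the quotient estimate is then obtained \emph{softly}, by the duality inequality $\|f\|_{\dot{W}_0^{-1,p}(\boz)}\ls\|u\|_{\dot{W}_0^{1,p}(\boz)/\tilde{\mathcal{A}}^p_0(\boz)}$ together with the Open Mapping Theorem. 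You instead split the representing field $\vec g$ into a near part (fed directly to Lemma \ref{l2.3}) and a far part (solved in $\rn$ and glued back with a second cutoff, the annular commutator again being handled by Lemma \ref{l2.3}), and you track constants through the construction. Your route avoids the trace/converse-trace step and the boundary-value problem \eqref{e2.7}, and it makes transparent why the $n=2$ compatibility condition of Lemma \ref{l2.2} is irrelevant here; the paper's route avoids having to quantify any of the intermediate solution operators.

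That last point is where your writeup has a genuine gap: you repeatedly invoke Lemma \ref{l2.3} ``with $\|\nabla u_1\|_{L^p(\boz)}\ls\|\vec g\|_{L^p(\boz)}$,'' but Lemma \ref{l2.3} as stated asserts only existence and uniqueness in $\dot{W}_0^{1,2}(\boz)\cap\dot{W}_0^{1,p}(\boz)$, with no norm bound. This can be repaired — either by a closed-graph argument for the solution operator of Lemma \ref{l2.3} restricted to the closed subspace of functionals supported in a fixed ball (using that the solution is unique in the intersection space and that such functionals embed continuously into $\dot{W}_0^{-1,2}(\boz)$ by the Lemma \ref{l2.1} argument), or more simply by dropping the constant-tracking altogether and finishing as the paper does with duality plus the Open Mapping Theorem once existence in $\dot{W}_0^{1,p}(\boz)$ is established — but as written the quantitative claims are not justified by the lemmas you cite. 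Two smaller slips: $\zeta$ cannot be both in $C^\fz_{\rm c}(\rn)$ and identically $1$ outside $B(0,R_0+1)$ (you mean $1-\zeta$ has compact support), and the identity $\zeta f_2=f_2$ needs $\zeta\equiv1$ on an open neighbourhood of $\supp f_2$, so the radii of the two cutoffs should be separated rather than both set at $R_0+1$; both are cosmetic.
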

\begin{remark}
Note that the above lemma is non-trivial only if $\min\{q(\mathcal{L}),q(\mathcal{L}_{D,\boz_R})\}\ge n$ and $\min\{q(\mathcal{L}),q(\mathcal{L}_{D,\boz_R})\}>2$.
 This is not surprise,
since by \cite[Theorem 1.1]{jl22} and a similar proof of \cite[Theorem 1.4]{jl22} { via using the role of} $q(\mathcal{L}_{D,\boz_R})$ instead of
using  \cite[Theorem B \& Theorem C]{sh05a} there, the Riesz operator $\nabla \mathcal{L}^{-1/2}$ is bounded for $p\in (1,n)\cup(1,2]$.
In this case the kernel $\mathcal{A}^p_0(\boz)$ must be trivial, i.e., equal zero.
\end{remark}

\begin{proof}[Proof of Lemma \ref{l2.5}]
By the Closed Range Theorem of Banach (see, for instance, \cite[Theorem 5.11-5]{c13}), we find that there exists
a vector-valued function $F\in L^p(\boz)$
such that $f=\mathrm{div}F$ in $\boz$. Extend $F$ by zero in $\boz^c$ and still denote this extension by $F$.
Let $\wz{f}:=\mathrm{div}F$. Then $\wz{f}\in \dot{W}^{-1,p}(\rn)$. From Lemma \ref{l2.0}, the assumption that $p\in(2,q(\mathcal{L}))$,
and the definition of the interval $(q(\mathcal{L}))',q(\mathcal{L}))$, it follows that there exists a
unique $w\in\dot{W}^{1,p}(\rn)$ up to constants such that
\begin{equation*}
\mathcal{L}w=\wz{f}\ \text{in}\ \rn.
\end{equation*}
Moreover, consider the Dirichlet problem
\begin{equation}\label{e2.6}
\lf\{\begin{array}{ll}
-\mathrm{div}(A\nabla z)=0\ \ &\text{in}\ \boz,\\
z=-w\  &\text{on}\  \partial\boz.
\end{array}\r.
\end{equation}
Then the problem \eqref{e2.6} has a unique solution $z\in\dot{W}^{1,2}(\boz)\cap\dot{W}^{1,p}(\boz)$.
Indeed, take a large $R\in(0,\fz)$ such that $\boz^c\subset B(0,R)$ and let $\boz_R:=\boz\cap B(0,R)$.
By $w\in\dot{W}^{1,p}(\rn)$, we conclude that $w\in W^{1/p',p}(\partial\boz)$. Let $u_z$ satisfy
\begin{equation}\label{e2.7}
\lf\{\begin{array}{ll}
-\mathrm{div}(A\nabla u_z)=0\ \ &\text{in}\ \boz_R,\\
u_z=-w\  &\text{on}\  \partial\boz,\\
u_z=0\  &\text{on}\  \partial B(0,R).\\
\end{array}\r.
\end{equation}
Then, from Lemma \ref{l2.4}, we infer that the problem \eqref{e2.7} has a unique solution $u_z\in W^{1,p}(\boz_R)$.
Extend $u_z$ by zero on $\rn\backslash B(0,R)$. Then $u_z\in \dot{W}^{1,2}(\boz)\cap\dot{W}^{1,p}(\boz)$.
Let $v$ satisfy
\begin{equation}\label{e2.8}
\lf\{\begin{array}{ll}
-\mathrm{div}(A\nabla v)=\mathrm{div}(A\nabla u_z)\ \ &\text{in}\ \boz,\\
v=0\  &\text{on}\  \partial\boz.
\end{array}\r.
\end{equation}
By $u_z\in\dot{W}^{1,p}(\boz)$ and $u_z\equiv0$ on $\rn\backslash B(0,R)$, we conclude that
$\mathrm{div}(A\nabla u_z)\in \dot{W}^{-1,p}_0(\boz)$ has compact support.
From this and Lemma \ref{l2.3}, it follows that the problem \eqref{e2.8}
has a unique solution $v\in \dot{W}_0^{1,2}(\boz)\cap\dot{W}_0^{1,p}(\boz)$.
Thus, the problem \eqref{e2.6} has a unique solution $z=u_z+v\in\dot{W}^{1,2}(\boz)\cap\dot{W}^{1,p}(\boz)$.
Then $u=w+z\in\dot{W}_0^{1,p}(\boz)$ is a solution of the problem \eqref{e2.5}.

Meanwhile, by the definition of the space $\tilde{\mathcal{A}}^p_0(\boz)$, we find that the problem \eqref{e2.5}
has a unique solution in $\dot{W}_0^{1,p}(\boz)/\tilde{\mathcal{A}}^p_0(\boz)$.
Furthermore, a duality argument shows that
\begin{equation*}
\|f\|_{\dot{W}_0^{-1,p}(\boz)}=\|\mathrm{div}(A\nabla u)\|_{\dot{W}_0^{-1,p}(\boz)}
\ls\|u\|_{\dot{W}_0^{1,p}(\boz)/\tilde{\mathcal{A}}^p_0(\boz)},
\end{equation*}
which, together with the Open Mapping Theorem of Banach (see, for instance, \cite[Theorem 5.6-2]{c13}),
further implies that
$$
\|u\|_{\dot{W}_0^{1,p}(\boz)/\tilde{\mathcal{A}}^p_0(\boz)}\ls\|f\|_{\dot{W}_0^{-1,p}(\boz)},
$$
namely
$$\inf_{\phi\in\tilde{\mathcal{A}}^p_0(\boz)}\lf\|\nabla u-\nabla\phi\r\|_{L^p(\boz)}\ls\|f\|_{\dot{W}_0^{-1,p}(\boz)}.$$
This finishes the proof of Lemma \ref{l2.5}.
\end{proof}

Now, we prove Theorem \ref{t1.1} by using Lemma \ref{l2.5} and {Theorem \ref{app-dirichlet}(i)}.

\begin{proof}[Proof of Theorem \ref{t1.1}]
Let $2<p\in [n, \min\{q(\mathcal{L}),q(\mathcal{L}_{D,\boz_R})\})$. By {Theorem \ref{app-dirichlet}(i)} together with a duality argument,
we see that, for any given $q\in(1,\fz)$ and any $g\in L^q(\boz)$,
\begin{equation*}
\lf\|\mathcal{L}_D^{1/2}g\r\|_{\dot{W}_0^{-1,q}(\boz)}\ls\|g\|_{L^q(\boz)}.
\end{equation*}
From this and Lemma \ref{l2.5}, we infer that
\begin{equation*}
\inf_{\phi\in\tilde{\mathcal{A}}^p_0(\boz)}\lf\|\nabla f-\nabla\phi\r\|_{L^p(\boz)}\ls\lf\|\mathcal{L}_D f\r\|_{\dot{W}_0^{-1,p}(\boz)}
=\lf\|\mathcal{L}_D^{1/2}\mathcal{L}_D^{1/2}f\r\|_{\dot{W}_0^{-1,p}(\boz)}\ls
\lf\|\mathcal{L}_D^{1/2}f\r\|_{L^p(\boz)}.
\end{equation*}
This finishes the proof of Theorem \ref{t1.1}.
\end{proof}

\section{On the kernel space and completion of the proof}
In this section, we first identify $\mathcal{A}^p_0(\boz)$ with $\tilde{\mathcal{A}}^p_0(\boz)$, and then with  $\mathcal{K}_p(\mathcal{L}_D^{1/2})$,  and
finally complete the proof of Theorem \ref{c1.2}.

\begin{lemma}\label{l2.9}
Let $n=2$, $\boz\subset\rr^2$ be an exterior Lipschitz domain, and $p\in(2,q(\mathcal{L}))$. Then the problem
\begin{equation}\label{e3.1}
\mathcal{L}u=\frac{1}{|\partial\boz|}\mathbf{1}_{\partial\boz}\ \text{in}\ \rr^2
\end{equation}
has a unique solution $u\in\dot{W}^{1,p}(\rr^2)$ up to constants.
\end{lemma}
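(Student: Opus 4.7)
My plan is to interpret $f:=\frac{1}{|\partial\boz|}\mathbf{1}_{\partial\boz}$ as the normalized arc-length measure on $\partial\boz$ (viewed as a compactly supported distribution in $\rr^2$), and then to reduce \eqref{e3.1} to two isomorphism statements: first, that $\mathrm{div}$ maps $L^p(\rr^2)/H_p$ onto $\dot{W}^{-1,p}(\rr^2)$ via Lemma \ref{l2.0}, and second, that $\nabla\mathcal{L}^{-1}\mathrm{div}$ is bounded on $L^p(\rr^2)$ by the very definition of $q(\mathcal{L})$. To apply the first of these I must first verify $f\in\dot{W}^{-1,p}(\rr^2)$: for any $g\in C^\fz_{\rm c}(\rr^2)$, the trace inequality on the bounded Lipschitz domain $\boz^c$ (whose boundary coincides with $\partial\boz$) gives $\|g\|_{L^{p'}(\partial\boz)}\ls\|g\|_{W^{1,p'}(\boz^c)}$. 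Since $p>2$ forces $p'<2$, the 2-D Sobolev embedding $\dot{W}^{1,p'}(\rr^2)\hookrightarrow L^{2p'/(2-p')}(\rr^2)$ together with H\"older's inequality on the bounded set $\boz^c$ absorbs the $L^{p'}(\boz^c)$-term into $\|\nabla g\|_{L^{p'}(\rr^2)}$, yielding $|\langle f,g\rangle|\ls\|\nabla g\|_{L^{p'}(\rr^2)}$.

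For existence, the crucial dimension-exponent coincidence is that, when $n=2$ and $p>2$, one has $1-n/p'=-1+2/p\in(-1,0)$, so $\lfloor 1-n/p'\rfloor=-1$ and $\mathcal{P}_{-1}=\{0\}$ by convention. The orthogonality condition in Lemma \ref{l2.0} is therefore vacuous, and the lemma produces $F\in L^p(\rr^2)$ with $\mathrm{div}F=f$. Since $p\in(q(\mathcal{L})',q(\mathcal{L}))$ (the lower endpoint being automatic as $p>2$), $\nabla\mathcal{L}^{-1}\mathrm{div}$ is bounded on $L^p(\rr^2)$, and $u:=\mathcal{L}^{-1}\mathrm{div}F$ then belongs to $\dot{W}^{1,p}(\rr^2)$ and solves $\mathcal{L}u=f$ in $\rr^2$.

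For uniqueness up to constants, I would take $w\in\dot{W}^{1,p}(\rr^2)$ with $\mathcal{L}w=0$. Since $p>n=2$, Morrey's inequality yields the sub-linear growth bound
\[
|w(x)-w(0)|\le C|x|^{1-2/p}\|\nabla w\|_{L^p(\rr^2)}=o(|x|),\quad |x|\to\fz,
\]
and the classical Moser-type Liouville theorem for divergence-form uniformly elliptic equations with bounded measurable coefficients then forces $w$ to be constant. The main obstacle is really the first step: showing that a 1-dimensional surface measure defines a bounded functional on the 2-D homogeneous Sobolev space. The coincidence $n=2$ and $p>2$ is essential — if instead the dimension or exponent forced $\lfloor 1-n/p'\rfloor\ge 0$, one would need a polynomial-moment compatibility condition on $f$ (as in Lemma \ref{l2.2} when $n=2$), which the present $f$ patently fails since $\langle f,1\rangle=1$.
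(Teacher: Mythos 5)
Your existence argument is correct and is essentially the paper's: you verify $\frac{1}{|\partial\boz|}\mathbf{1}_{\partial\boz}\in\dot W^{-1,p}(\rr^2)$ by a trace inequality on the bounded Lipschitz set $\boz^c$ plus the subcritical Sobolev embedding (the paper does the same via the trace embedding $W^{1,q'}(\boz^c)\hookrightarrow L^t(\partial\boz)$), and you then invoke Lemma \ref{l2.0} --- correctly noting that $\lfloor 1-2/p'\rfloor=-1$ makes the orthogonality condition vacuous --- together with the $L^p$-boundedness of $\nabla\mathcal{L}^{-1}\mathrm{div}$.

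The uniqueness step, however, has a genuine gap. The standing hypotheses on $A$ in this lemma are only boundedness, symmetry and uniform ellipticity, and for such coefficients the principle ``$\mathcal{L}w=0$ in $\rr^2$ and $|w(x)|=o(|x|)$ imply $w$ is constant'' is false: Meyers' example $w(x)=x_1|x|^{\alpha-1}$ with $\alpha\in(0,1)$ is a non-constant entire weak solution of a divergence-form uniformly elliptic equation with $L^\infty$ coefficients, and it is homogeneous of degree $\alpha<1$, hence of sublinear growth. The De Giorgi--Nash--Moser oscillation-decay Liouville theorem only covers solutions of growth $o(|x|^{\alpha_0})$, where $\alpha_0$ is the interior H\"older exponent of the operator, and there is no reason why $1-2/p<\alpha_0$ when $p$ is close to $q(\mathcal{L})$. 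So the Morrey growth bound alone cannot close the argument; you must use the full information $\nabla w\in L^p(\rr^2)$ together with $p\in(q(\mathcal{L})',q(\mathcal{L}))$, which is exactly what the paper does: for $p$ in this range, $\mathcal{L}$ is invertible from $\dot W^{1,p}(\rr^2)$ modulo constants onto the relevant subspace of $\dot W^{-1,p}(\rr^2)$ (a consequence of Lemma \ref{l2.0} and the boundedness of $\nabla\mathcal{L}^{-1}\mathrm{div}$ on both $L^p$ and $L^{p'}$), and injectivity modulo constants is precisely the uniqueness claim. Note that Meyers' solution has $\nabla w\notin L^p(\rr^2)$ for any single $p$ (integrability fails at the origin or at infinity), so it does not contradict the lemma itself --- only the route you propose to it.
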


\begin{proof}
Let $q\in(2,\infty)$ and $t\in(1,\fz)$ be given by $\frac{1}{t}=\frac{2}{q'}-1$.
Then, by the Sobolev trace embedding theorem (see, for instance, \cite[Section 2.4.2, Theorem 4.2]{j13}), we find that, for any $\fai\in C^\fz_{\rm c}(\rn)$,
\begin{align*}
\lf|\lf\langle \frac{1}{|\partial\boz|}\mathbf{1}_{\partial\boz},\fai\r\rangle\r|
&=\lf|\frac{1}{|\partial\boz|}\int_{\partial\boz}\fai(x)\,d\sigma(x)\r|
\le\frac{1}{|\partial\boz|^{1/t}}\|\fai\|_{L^t(\partial\boz)}\\
&\ls\frac{1}{|\partial\boz|^{1/t}}\|\fai\|_{W^{1,q'}(\boz^c)}
\ls\frac{1}{|\partial\boz|^{1/t}}\|\fai\|_{\dot{W}^{1,q'}(\rn)}.
\end{align*}
Therefore, $\frac{1}{|\partial\boz|}\mathbf{1}_{\partial\boz}\in \dot{W}^{-1,q}(\rn)$ with any given $q\in(2,\infty)$.

Let $p\in(2,q(\mathcal{L}))$. From Lemma \ref{l2.0} with $n=2$, we deduce that there exists $f\in L^p(\rn)$ such that $\mathrm{div}f=\frac{1}{|\partial\boz|}\mathbf{1}_{\partial\boz}$. By this and
the assumption $p\in(2,q(\mathcal{L}))$, we further conclude that there exists
$u\in\dot{W}^{1,p}(\rn)$ such that $\mathcal{L}u=\frac{1}{|\partial\boz|}\mathbf{1}_{\partial\boz}$.

Moreover, if there exist $u_1,u_2\in\dot{W}^{1,p}(\rn)$ satisfy
$\mathcal{L}u_1=\frac{1}{|\partial\boz|}\mathbf{1}_{\partial\boz}=\mathcal{L}u_2$.
Then $u_1-u_2\in\dot{W}^{1,p}(\rn)$ and $\mathcal{L}(u_1-u_2)=0$, which, together
with $p\in(q(\mathcal{L})',q(\mathcal{L}))$, further implies that $u_1-u_2=c$.
Thus, the problem \eqref{e3.1} has a unique solution $u\in\dot{W}^{1,p}(\rn)$ up to constants. This finishes the proof of Lemma \ref{l2.9}.
\end{proof}

The following  was essentially obtained in \cite{sh05a}.
\begin{lemma}\label{l2.8}
Let $n\ge2$ and $O\subset\rn$ be a bounded Lipschitz domain. If {$A\in\mathrm{VMO}(\rn)$}, then there exists a positive
constant $\epz_0$, depending only on $n$ and the Lipschitz constant of $O$, such that, for any given
$p\in(\frac{4+\epz_0}{3+\epz_0},4+\epz_0)$ when $n=2$ or $p\in(\frac{3+\epz_0}{2+\epz_0},3+\epz_0)$ when $n\ge3$ ,
$\nabla\mathcal{L}_{D,O}^{-1}\mathrm{div}$ is bounded on $L^p(O)$. In particular, if $\partial O\in C^1$,
it holds that $\epz_0=\fz$; that is, $\nabla\mathcal{L}_{D,O}^{-1}\mathrm{div}$ is bounded on $L^p(O)$ for any $p\in(1,\fz)$.
\end{lemma}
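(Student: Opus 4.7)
The statement is essentially Shen's theorem from \cite{sh05a}, so my plan is to explain how to extract it directly (for $A=I$) and how to upgrade to VMO coefficients and to $C^1$ boundary, rather than to redo any hard analysis. First, I would recall that Shen \cite{sh05a} establishes the sharp $L^p$ boundedness of $\nabla \mathcal{L}_{D,O}^{-1}\mathrm{div}$ on bounded Lipschitz domains via his real variable $L^p$-extrapolation theorem: provided one has (a) the $L^2$ boundedness (which comes from the Lax--Milgram theorem together with $A\in L^{\infty}$ and the ellipticity assumption) and (b) a weak reverse Hölder estimate for weak solutions of $\mathcal{L}_{D,O}u=0$ near the boundary, one obtains $L^p$ boundedness on the range of $p$ dictated by the exponent in the reverse Hölder inequality. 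For $A=I$, Shen proves that this exponent is $4+\epz_0$ when $n=2$ and $3+\epz_0$ when $n\ge 3$, with $\epz_0$ depending only on $n$ and the Lipschitz constant of $O$.

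To accommodate $A\in\mathrm{VMO}(\rn)$ rather than $A=I$, I would use a standard freezing-coefficients perturbation: at each point $x_0\in\overline{O}$ and at sufficiently small scale $r$, $A$ differs from its average $(A)_{B(x_0,r)}$ in $L^{\infty}$-mean by an amount controlled by the VMO modulus, which can be made arbitrarily small uniformly in $x_0$ by choosing $r$ small. Combining this with a Caccioppoli/Meyers-type self-improving inequality produces, for the solution of $\mathcal{L}_{D,O}u=0$, the same reverse Hölder estimate that Shen proves in the constant-coefficient case, with the same admissible exponents up to shrinking $\epz_0$ slightly. Feeding this into Shen's extrapolation theorem yields the VMO version on the announced range.

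For the $C^1$ case, I would flatten the boundary locally by a $C^1$ diffeomorphism; this transforms $\mathcal{L}_{D,O}$ into an operator of the same form on a half-ball whose coefficients remain in VMO because the Jacobian of a $C^1$ map is continuous, hence multiplication by it preserves VMO. On a half-ball with VMO coefficients, the full-range $L^p$ Calderón--Zygmund estimate $\|\nabla u\|_{L^p}\lesssim \|\mathrm{div}\, F\|_{W^{-1,p}}$ for all $p\in(1,\infty)$ is classical (Chiarenza--Frasca--Longo and successors). A partition-of-unity argument together with the interior estimate (also valid for all $p\in(1,\infty)$ when $A\in\mathrm{VMO}$) then gives the claimed boundedness on $O$ for every $p\in(1,\infty)$, i.e.\ $\epz_0=\fz$. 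The only delicate point is verifying that Shen's reverse Hölder estimate holds uniformly in $x_0$ and $r$ under the VMO hypothesis; this is where the small-BMO property is genuinely used, but since the VMO modulus is uniform on compact sets, this follows from the standard perturbation scheme and is not really the main obstacle.
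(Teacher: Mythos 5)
Your proposal is correct and follows the same route as the paper, which simply attributes this lemma to Shen \cite{sh05a} without reproducing the argument: the result is precisely Shen's $W^{1,p}$ estimate for the Dirichlet problem with VMO coefficients on Lipschitz (resp.\ $C^1$) domains, obtained via his $L^p$ real-variable extrapolation theorem from $L^2$ bounds plus boundary weak reverse H\"older inequalities, with the VMO hypothesis handled by freezing coefficients at small scales. Your sketch is an accurate outline of that proof, so no gap to report.
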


We now identify  $\mathcal{A}^p_0(\boz)$ with $\tilde{\mathcal{A}}^p_0(\boz)$.

\begin{proposition}\label{p2.1}
Let $n\ge2$, $\boz\subset\rn$ be an exterior Lipschitz domain, and $p\in(1,\fz)$.
Take a large $R\in(0,\fz)$ such that $\boz^c\subset B(0,R)$ and let $\boz_R:=\boz\cap B(0,R)$.
Assume that $\min\{q(\mathcal{L}),q(\mathcal{L}_{D,\boz_R})\}>n$ and $2<p\in [n,\min\{q(\mathcal{L}),q(\mathcal{L}_{D,\boz_R})\})$. When $n\ge3$,
$$\tilde{\mathcal{A}}^p_0(\boz)=\mathcal{A}^p_0(\boz)=\{c(u_0-1): \ c\in\rr\},
$$
where $u_0$ is the unique solution in $\dot{W}^{1,2}(\boz)\cap\dot{W}^{1,p}(\boz)$ of the problem
\begin{equation}\label{e2.9}
\lf\{\begin{array}{ll}
-\mathrm{div}(A\nabla u_0)=0\ \ &\text{in}\ \boz,\\
u_0=1\  &\text{on}\  \partial\boz.
\end{array}\r.
\end{equation}
When $n=2$,
$$\tilde{\mathcal{A}}^p_0(\boz)=\mathcal{A}^p_0(\boz)=\{c(u_0-u_1): \ c\in\rr\},
$$
where $u_1$ is a solution of the problem \eqref{e3.1} and $u_0$ is the
unique solution in $\dot{W}^{1,2}(\boz)\cap\dot{W}^{1,p}(\boz)$ of the problem
\begin{equation}\label{e2.9a}
\lf\{\begin{array}{ll}
-\mathrm{div}(A\nabla u_0)=0\ \ &\text{in}\ \boz,\\
u_0=u_1\  &\text{on}\  \partial\boz.
\end{array}\r.
\end{equation}
\end{proposition}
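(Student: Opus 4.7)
The strategy is to first construct the reference function $u_0$, then verify the easy inclusion $\mathcal{A}^p_0(\boz)\subset\tilde{\mathcal{A}}^p_0(\boz)$, and finally establish the reverse (nontrivial) inclusion.

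\emph{Construction of $u_0$.} For $n\ge3$, pick a cutoff $\eta\in C^\fz_{\rm c}(\rn)$ with $\eta\equiv1$ in a neighborhood of $\partial\boz$ and $\supp\eta\subset B(0,R)$. Then $g:=\mathrm{div}(A\nabla\eta)$ belongs to $\dot{W}^{-1,p}_0(\boz)$ and is compactly supported in $\boz\cap B(0,R)$. Lemma~\ref{l2.3} furnishes a unique $v\in\dot{W}^{1,2}_0(\boz)\cap\dot{W}^{1,p}_0(\boz)$ solving $-\mathrm{div}(A\nabla v)=g$, and $u_0:=\eta+v$ is the desired solution in $\dot{W}^{1,2}(\boz)\cap\dot{W}^{1,p}(\boz)$. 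Uniqueness follows because the difference of any two solutions lies in $\dot{W}^{1,2}_0(\boz)$ and is $\mathcal{L}$-harmonic, whence the Lax--Milgram energy identity forces it to vanish. The $n=2$ case is parallel, with $\eta$ chosen to interpolate the Sobolev trace of $u_1$ on $\partial\boz$ (using Lemma~\ref{l2.9}).

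\emph{Easy inclusion.} For $n\ge3$, $u_0-1$ has vanishing trace on $\partial\boz$ and $\nabla(u_0-1)=\nabla u_0\in L^p(\boz)$, so $u_0-1\in\dot{W}^{1,p}_0(\boz)$; clearly $\mathcal{L}_D(u_0-1)=0$ in $\boz$. For $n=2$, $u_0-u_1$ has vanishing trace on $\partial\boz$, $\nabla(u_0-u_1)\in L^p(\boz)$, and $\mathcal{L}(u_0-u_1)=-\frac{1}{|\partial\boz|}\mathbf{1}_{\partial\boz}$ vanishes in the open set $\boz$. This gives $\mathcal{A}^p_0(\boz)\subset\tilde{\mathcal{A}}^p_0(\boz)$.

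\emph{Reverse inclusion.} Given $w\in\tilde{\mathcal{A}}^p_0(\boz)$, the plan is to produce a unique scalar $c\in\rr$ such that $w=c(u_0-1)$ (resp.\ $w=c(u_0-u_1)$), by first extracting a far-field profile of $w$. Since $\nabla w\in L^p(\boz)$ with $p\ge n$, a Morrey/Poincar\'e-type oscillation estimate on dyadic annuli $A_R:=\{R<|x|<2R\}$ gives
\[
\|w-(w)_{A_R}\|_{L^\fz(A_R)}\ls R^{1-n/p}\|\nabla w\|_{L^p(A_R)}
\]
(with a $\mathrm{BMO}$-variant at the endpoint $p=n$). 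Combined with the absolute continuity of $\int|\nabla w|^p$ and De Giorgi--Nash--Moser H\"older regularity, this produces a scalar $L_\fz\in\rr$ with $w(x)\to L_\fz$ as $|x|\to\fz$ (in the $n\ge3$ case, where $u_0\to 0$ at infinity). Setting $c:=-L_\fz$, the function $w':=w-c(u_0-1)$ lies in $\dot{W}^{1,p}_0(\boz)$, solves $\mathcal{L}w'=0$, vanishes on $\partial\boz$, and tends to $0$ at infinity; the maximum principle (valid because $A$ is uniformly elliptic with $\mathrm{VMO}$ or $\mathrm{CMO}$ coefficients and $\partial\boz$ is $C^1$) then forces $w'\equiv0$. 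For $n=2$, $u_1$ grows logarithmically at infinity, so the constant $c$ is chosen to cancel a matching logarithmic leading term of $w$ extracted from comparison with the fundamental solution of $\mathcal{L}$ in $\rr^2$, after which the bounded remainder again vanishes by the 2D exterior maximum principle.

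\emph{Main obstacle.} The technical core is the asymptotic analysis of $w$ at infinity, especially for $n=2$ where elements of $\dot{W}^{1,p}_0(\boz)$ may grow logarithmically. Distilling the correct leading coefficient from only $L^p$-control of $\nabla w$ requires combining the dyadic oscillation estimate with explicit asymptotics of $u_0$ and $u_1$, in the spirit of Amrouche--Girault--Giroire~\cite{agg97}. This is precisely where the hypothesis $p\ge n$ is essential: when $p<n$ the oscillation argument forces $w\equiv0$ and the kernel is trivial, whereas for $p\ge n$ a one-dimensional far-field mode survives, and pinning it down to $u_0-1$ (or $u_0-u_1$) is the crux of the proof.
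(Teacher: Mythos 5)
Your overall skeleton (construct $u_0$, easy inclusion, hard inclusion) matches the statement, and your construction of $u_0$ via a cutoff plus Lemma \ref{l2.3} is fine. But your argument for the hard inclusion $\tilde{\mathcal{A}}^p_0(\boz)\subset\mathcal{A}^p_0(\boz)$ has a genuine gap, and it is a different (and in my view unworkable as stated) route from the paper's. You propose to extract a far-field limit $L_\fz$ of $w$ from the dyadic oscillation estimate $\|w-(w)_{A_R}\|_{L^\fz(A_R)}\ls R^{1-n/p}\|\nabla w\|_{L^p(A_R)}$. This does not produce a limit: for $p>n$ the accumulated oscillation $\sum_k 2^{k(1-n/p)}\|\nabla w\|_{L^p(A_{2^k})}$ can diverge even though $\|\nabla w\|_{L^p}<\fz$ (the exponent $1-n/p$ is positive), so a general element of $\dot{W}^{1,p}$ need not converge at infinity; at $p=n$ the BMO-type bound likewise gives no limit. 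To get decay you would have to exploit the equation $\mathcal{L}w=0$ near infinity, and for merely $\mathrm{VMO}$ coefficients the required pointwise asymptotics (and, for $n=2$, the ``comparison with the fundamental solution'' and extraction of a logarithmic coefficient) are exactly the hard analysis you have deferred. The subsequent appeal to a maximum principle on an unbounded domain would also need a Phragm\'en--Lindel\"of justification. Finally, in the ``easy inclusion'' you assert that vanishing trace plus $\nabla(u_0-1)\in L^p$ gives membership in $\dot{W}^{1,p}_0(\boz)$; since that space is a completion of $C^\fz_{\rm c}(\boz)$, the real point is that constants lie in $\dot{W}^{1,p}(\rn)$ precisely when $p\ge n$ (this inclusion is false for $p<n$), which deserves to be said explicitly.

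The paper's proof avoids all asymptotic analysis by a different device: extend $\phi\in\tilde{\mathcal{A}}^p_0(\boz)$ by zero across $\partial\boz$; then $h:=\mathrm{div}(A\nabla\phi)$ is a distribution in $\dot{W}^{-1,p}(\rn)$ supported on $\partial\boz$ (a conormal-derivative layer), so Lemma \ref{l2.2} solves $\mathcal{L}w=h$ in all of $\rn$ with $w\in\dot{W}^{1,2}(\rn)\cap\dot{W}^{1,p}(\rn)$, and the Liouville property built into $p<q(\mathcal{L})$ (via Lemma \ref{l2.0}) forces $w-\phi$ to be a constant; this identifies $\phi$ as $c(u_0-1)$ directly. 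For $n=2$ the compatibility condition $\langle\cdot,1\rangle=0$ is restored by subtracting $\langle h,1\rangle\,\mathrm{div}(A\nabla u_1)$, which is exactly why $u_1$ appears in the statement. I would encourage you to adopt this zero-extension argument: it replaces your missing decay estimates and maximum principle with the whole-space solvability and uniqueness that the hypothesis $p<q(\mathcal{L})$ already provides.
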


\begin{proof}
{ For $2<p\in [n,\min\{q(\mathcal{L}),q(\mathcal{L}_{D,\boz_R})\})$ and $\phi\in\tilde{\mathcal{A}}^p_0(\boz)$, extend $\phi$ by zero in $\boz^c$}. Then the extension
of $\phi$, still denoted by $\phi$, belongs to $\dot{W}^{1,p}(\rn)$ and satisfies that
\begin{equation*}
\mathrm{div}(A\nabla\phi)=0\ \text{in}\ \boz, \ \mathrm{div}(A\nabla\phi)=0\ \text{in}\ \boz^c,\ \text{and}\ \phi=0\ \text{on}\ \partial\boz.
\end{equation*}
Since $\phi\in\dot{W}^{1,p}(\rn)$, it follows that $\frac{\partial \phi}{\partial\boldsymbol{\nu}}\in W^{-1/p,p}(\partial\boz)$,
where $\frac{\partial \phi}{\partial\boldsymbol{\nu}}:=(A\nabla \phi)\cdot\boldsymbol{\nu}$
denotes the \emph{conormal derivative} of $\phi$ on $\partial\boz$, and $W^{-1/p,p}(\partial\boz)$ denotes the
dual space of $W^{1/p,p'}(\partial\boz)$.
Moreover, it is easy to show that $\mathrm{div}(A\nabla\phi)$, as a distribution in $\rn$, satisfies that, for any
$\fai\in C^\fz_{\rm c}(\rn)$,
\begin{equation}\label{e3.0}
\langle\mathrm{div}(A\nabla\phi),\fai\rangle=-\lf\langle\frac{\partial\phi}{\partial\boldsymbol{\nu}},\fai\r\rangle_{\partial\boz},
\end{equation}
where $\langle\cdot,\cdot\rangle_{\partial\boz}$ denotes the duality pairing between $W^{-1/p,p}(\partial\boz)$ and
$W^{1/p,p'}(\partial\boz)$. Furthermore, let $h$ denote the distribution defined by $\mathrm{div}(A\nabla\phi)$;
that is, for any $\fai\in C^\fz_{\rm c}(\rn)$,
$$
\langle h,\fai\rangle=\langle\mathrm{div}(A\nabla\phi),\fai\rangle=-\lf\langle\frac{\partial\phi}{\partial\boldsymbol{\nu}},
\fai\r\rangle_{\partial\boz},
$$
which, combined with the Sobolev trace embedding theorem (see, for instance, \cite[Section 2.5.4, Theorem 5.5]{j13}),
further implies that, for any $\fai\in C^\fz_{\rm c}(\rn)$,
\begin{align*}
|\langle h,\fai\rangle|&\le\lf\|\frac{\partial\phi}{\partial\boldsymbol{\nu}}\r\|_{W^{-1/p,p}(\partial\boz)}
\|\fai\|_{W^{1/p,p'}(\partial\boz)}\ls\lf\|\frac{\partial\phi}{\partial\boldsymbol{\nu}}\r\|_{W^{-1/p,p}(\partial\boz)}
\|\fai\|_{W^{1,p'}(\boz^c)}\\
&\ls\lf\|\frac{\partial\phi}{\partial\boldsymbol{\nu}}\r\|_{W^{-1/p,p}(\partial\boz)}
\|\fai\|_{\dot{W}^{1,p'}(\rn)}.
\end{align*}
By this, we conclude that $h\in\dot{W}^{-1,p}(\rn)$ and $h$ has a compact support.

When $n\ge3$, from Lemma \ref{l2.2}, it follows that
the problem, that $\mathrm{div}(A\nabla w)=h$ in $\rn$, has a unique solution in $\dot{W}^{1,2}(\rn)\cap
\dot{W}^{1,p}(\rn)$ up to constants. Therefore, $w-\phi\in\dot{W}^{1,p}(\rn)$ and $\mathrm{div}(A\nabla (w-\phi))=0$ in $\rn$.
By this and the assumption $2<p\in [n,\min\{q(\mathcal{L}),q(\mathcal{L}_{D,\boz_R})\})$, we find that $w-\phi=c$ with $c\in\rr$, which, together with
the fact that $\mathrm{div}(A\nabla\phi)=0$ in $\boz$, implies that the restriction of $w$ to $\boz$
is the unique solution in $\dot{W}^{1,2}(\boz)\cap\dot{W}^{1,p}(\boz)$ of the problem
that $\mathrm{div}(A\nabla w)=0$ in $\boz$ and $w=c$ on $\partial\boz$.
Thus, $w=cu_0$ with $u_0$ being the same as in \eqref{e2.9} and $\phi=c(u_0-1)$.
This shows $\tilde{\mathcal{A}}^p_0(\boz)\subset {\mathcal{A}}^p_0(\boz).$

When $n=2$, without the loss of generality, we can assume that $\langle h,1\rangle\neq0$. Otherwise, $\langle h,1\rangle=0$,
we see that $h\in \dot{W}^{-1,2}(\rn)\cap \dot{W}^{-1,p}(\rn)$, then  the same proof as in the case of $n\ge3$ yields that
$\phi=c(u_0-1)$ with $u_0$ obtained by \eqref{e2.9}. However, since $1\in \dot{W}^{1,2}(\rr^2)\cap
\dot{W}^{1,p}(\rr^2)$, the uniqueness then implies $u_0=1$ and $\phi=0$.

Suppose now $\langle h,1\rangle\neq0$.  Let $u_1\in\dot{W}^{1,p}(\rn)$ be a solution of the problem
\eqref{e3.1}. Then $u_1$ satisfies
$$
\mathrm{div}(A\nabla u_1)=0\ \text{in}\ \boz,\ \ \mathrm{div}(A\nabla u_1)=0\ \text{in}\ \boz^c, \ \ \text{and} \ \langle -\mathrm{div}(A\nabla u_1),1\rangle=1.
$$
Let $w\in\dot{W}^{1,2}(\rr^2)\cap\dot{W}^{1,p}(\rr^2)$ satisfies
\begin{equation}\label{e3.2}
\mathrm{div}(A\nabla w)=h+\langle h,1\rangle\mathrm{div}(A\nabla u_1)\ \text{in}\ \rr^2.
\end{equation}
Indeed, by $h\in\dot{W}^{-1,p}(\rr^2)$ and
$$-\mathrm{div}(A\nabla u_1)=\frac{1}{|\partial\boz|}\mathbf{1}_{\partial\boz}\in \dot{W}^{-1,p}(\rr^2),$$
we conclude that the right-hand side of \eqref{e3.2} belongs to $\dot{W}^{-1,p}(\rr^2)$. This, together with the fact that
both $h$ and $\frac{1}{|\partial\boz|}\mathbf{1}_{\partial\boz}$ have compact supports and Lemma \ref{l2.1}, further implies
that
$$h+\langle h,1\rangle\mathrm{div}(A\nabla u_1)\in\dot{W}^{-1,2}(\rr^2).$$
Moreover, it is easy to find that
$$
\langle[h+\langle h,1\rangle\mathrm{div}(A\nabla u_1)],1\rangle=0.
$$
Therefore, from this and Lemma \ref{l2.2}, we deduce that the problem \eqref{e3.2} has a unique solution
$w\in\dot{W}^{1,2}(\rr^2)\cap\dot{W}^{1,p}(\rr^2)$ up to constants. Then, by the fact that $w$ satisfies \eqref{e3.2}, we conclude that
$w-\langle h,1\rangle u_1-\phi\in\dot{W}^{1,p}(\rr^2)$ and
$$
\mathrm{div}(A\nabla [w-\langle h,1\rangle u_1-\phi])=0\ \text{in}\ \rr^2,
$$
where $\phi$ is as in \eqref{e3.0}, which, combined with $p\in(2,q(\mathcal{L}))$, implies that
\begin{equation}\label{e3.3}
\phi=w-\langle h,1\rangle u_1-c \ \text{in}\ \rr^2,
\end{equation}
where $c$ is a constant.

From the boundary condition that $\phi=0$ on $\partial\boz$ and \eqref{e3.3}, we deduce that $w=c+\langle h,1\rangle u_1$ on $\partial\boz$.
Then the restriction of $w$ to $\boz$ is the unique solution in $\dot{W}^{1,2}(\boz)\cap\dot{W}^{1,p}(\boz)$ of the problem
that $\mathrm{div}(A\nabla w)=0$ in $\boz$ and $w=c+\langle h,1\rangle u_1$ on $\partial\boz$.
Moreover, let $w_1\in\dot{W}^{1,2}(\boz)\cap\dot{W}^{1,p}(\boz)$ be the unique solution of the problem
\begin{equation*}
\lf\{\begin{array}{ll}
\mathrm{div}(A\nabla w_1)=0\ \ &\text{in}\ \boz,\\
w_1=\langle h,1\rangle u_1\  &\text{on}\  \partial\boz,
\end{array}\r.
\end{equation*}
that is, $w_1=\langle h,1\rangle u_0$. Then, we find that $w=w_1+c=\langle h,1\rangle u_0+c.$ This, combined with \eqref{e3.3}, further conclude that
$$\phi=\langle h,1\rangle (u_0-u_1).$$
This shows $\tilde{\mathcal{A}}^p_0(\boz)\subset {\mathcal{A}}^p_0(\boz)$ for $n=2$.

The converse inclusion ${\mathcal{A}}^p_0(\boz)\subset \tilde{\mathcal{A}}^p_0(\boz)$ is obvious, since constants belongs to $\dot{W}^{1,p}(\rn)$ for $p\ge n$
and $u_1\in \dot{W}^{1,p}(\boz)$ for $n=2$.
\end{proof}

\begin{lemma}\label{l2.7}
Let $n\ge2$ and $p\in(1,\fz)$. Suppose that $A\in\mathrm{VMO}(\rn)$ satisfies $(GD)$, or $A\in\mathrm{CMO}(\rn)$.
Then the operator $\nabla\mathcal{L}^{-1}\mathrm{div}$ is bounded on $L^p(\rn)$.
\end{lemma}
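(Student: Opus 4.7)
The plan is to reduce this lemma to the global $L^p$-boundedness of the Riesz transform $\nabla \mathcal{L}^{-1/2}$ on $\rn$. The remark preceding this section already establishes $q(\mathcal{L}) = p(\mathcal{L})$ via the quantitative inequality
$$
\lf\|\nabla \mathcal{L}^{-1}\divz\r\|_{p\to p} \ls \lf\|\nabla \mathcal{L}^{-1/2}\r\|_{p\to p}\, \lf\|\nabla \mathcal{L}^{-1/2}\r\|_{p'\to p'}.
$$
Hence it suffices to show that, under either of the stated hypotheses on $A$, one has $p(\mathcal{L}) = \infty$, at which point every $p\in(1,\infty)$ lies inside the interval $(p(\mathcal{L})', p(\mathcal{L}))$ and the right-hand side above is finite.

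When $A\in\mathrm{CMO}(\rn)$, I would invoke the Iwaniec--Sbordone theorem \cite[Theorem 1]{is98}: for any matrix coefficient in $\mathrm{VMO}(\rn)$ (a fortiori for $\mathrm{CMO}$), the operator $\mathrm{div}(A\nabla\cdot)$ is an isomorphism from $\dot{W}^{1,p}(\rn)$ onto $\dot{W}^{-1,p}(\rn)$ for every $p\in(1,\infty)$. Combining this with the classical $L^p$-boundedness of $\nabla(-\Delta)^{-1/2}$ and the Auscher--Tchamitchian functional calculus from \cite{at01}, one deduces $\|\nabla \mathcal{L}^{-1/2} f\|_{L^p(\rn)}\ls \|f\|_{L^p(\rn)}$ for all $p\in(1,\infty)$, i.e. $p(\mathcal{L})=\infty$.

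When $A\in\mathrm{VMO}(\rn)$ satisfies $(GD)$, the conclusion is the main result of \cite{jl20}, which I would simply cite. The underlying idea is a two-scale argument: the VMO hypothesis controls the local oscillation so that the theory of \cite{is98} handles the small-scale behaviour, while $(GD)$ ensures that at large scales $A$ approaches the identity matrix with a polynomial rate, allowing a perturbation of $\mathcal{L}$ by $-\Delta$ to govern the behaviour at infinity. These two ingredients are spliced together at the level of the gradient heat kernel to give $L^p$-bounds for $\nabla \mathcal{L}^{-1/2}$ on the entire range $p\in(1,\infty)$, so once again $p(\mathcal{L})=\infty$.

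The main technical obstacle lies in the second case: the delicate gradient heat kernel estimates needed to glue the local VMO control to the large-scale perturbation control, uniformly in $p$, are carried out in \cite{jl20}. Within the present paper, both the $\mathrm{CMO}$ isomorphism theorem and the $\mathrm{VMO}{+}(GD)$ Riesz-transform bound are used as black boxes, so the proof of Lemma \ref{l2.7} reduces to combining these two citations with the identity $q(\mathcal{L})=p(\mathcal{L})$ already recorded in the earlier remark.
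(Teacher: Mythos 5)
Your route is the same as the paper's: cite \cite[Theorem 1]{is98} for the $\mathrm{CMO}$ case, cite \cite{jl20} for the $\mathrm{VMO}$ plus $(GD)$ case, and pass between $\nabla\mathcal{L}^{-1}\mathrm{div}$ and $\nabla\mathcal{L}^{-1/2}$ via the identity $q(\mathcal{L})=p(\mathcal{L})$ recorded in the earlier remark. The conclusion is reached correctly, but one assertion you make along the way is wrong and worth correcting: you state that the Iwaniec--Sbordone theorem gives the isomorphism $\mathrm{div}(A\nabla\cdot)\colon \dot{W}^{1,p}(\rn)\to\dot{W}^{-1,p}(\rn)$ for \emph{any} $A\in\mathrm{VMO}(\rn)$, ``a fortiori for $\mathrm{CMO}$.'' In the paper's terminology $\mathrm{VMO}(\rn)$ only controls mean oscillation as $r\to0$ and says nothing about the behaviour of $A$ at spatial infinity; if the global $L^p$-bound for $\nabla\mathcal{L}^{-1}\mathrm{div}$ held for every such $A$ and every $p\in(1,\infty)$, the hypothesis $(GD)$ in the second case would be vacuous, and this contradicts the fact (emphasized in the introduction, with references to \cite{acdh04,cjks20,jl20}) that for $p>2$ the large-scale behaviour of $A$ can destroy these bounds even for smooth, hence locally $\mathrm{VMO}$, coefficients. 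The result of \cite{is98} as used here requires oscillation control at \emph{all} scales, i.e.\ $\mathrm{CMO}(\rn)$; since you only invoke it for $A\in\mathrm{CMO}(\rn)$, your proof of the lemma survives, but the parenthetical generalization should be deleted. A minor further point: in the $\mathrm{CMO}$ case the detour through $\nabla(-\Delta)^{-1/2}$ and the square-root functional calculus of \cite{at01} to obtain $p(\mathcal{L})=\infty$ is unnecessary, since \cite[Theorem 1]{is98} already yields the boundedness of $\nabla\mathcal{L}^{-1}\mathrm{div}$ itself, which is exactly the assertion of the lemma; the identity $q(\mathcal{L})=p(\mathcal{L})$ is only needed in the $\mathrm{VMO}$ plus $(GD)$ case, where the input from \cite{jl20} is a bound on $\nabla\mathcal{L}^{-1/2}$ rather than on $\nabla\mathcal{L}^{-1}\mathrm{div}$.
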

\begin{proof}
The  case $A\in\mathrm{CMO}(\rn)$  follows from \cite[Theorem 1]{is98}.
For the case $A\in\mathrm{VMO}(\rn)$ satisfying $(GD)$, it follows from \cite{jl20} (see also \cite[Theorem 5.1 \& Proposition 5.2]{jl22}) that
$\nabla \mathcal{L}^{-1/2}$ is bounded on $L^p(\rn)$ for $1<p<\infty$. Thus we have
$${\|\nabla \mathcal{L}^{-1}\mathrm{div}\|_{p\to p}<\infty},$$
which gives the desired conclusion.
\end{proof}

We also point out that when both the matrix $A$ and $\boz$ have nice smoothness,
the function $u$ as in \eqref{e3.1} can be represented by using the fundamental solution associated with $\mathcal{L}$ (see, for instance, \cite[Theorem 2.7]{agg97}).
\begin{proposition}\label{p3.2}
Let $n\ge2$, $\boz\subset\rn$ be an exterior Lipschitz domain, and $p\in(1,\fz)$.
\begin{itemize}
\item[{\rm (i)}] If $\Omega$ is $C^1$, and $A\in\mathrm{CMO}(\rn)$, or {$A\in \mathrm{VMO}(\rn)$ satisfies $(GD)$}, then,
when $n\ge3$, for any $p\in[n, \infty)$,
$\tilde{\mathcal{A}}^p_0(\boz)=\{c(u_0-1): \ c\in\rr\}$ with $u_0$ being the same as in \eqref{e2.9}; when $n=2$, for any $p\in(2, \infty)$,
$\tilde{\mathcal{A}}^p_0(\boz)=\{c(u_0-u_1): \ c\in\rr\}$ with $u_0$ being the same as in \eqref{e2.9} and $u_1$ being a solution of the problem \eqref{e3.1}.

\item[{\rm (ii)}] Assume $\mathcal{L}_D:=\Delta_D$ and $\Omega$ is $C^{1,1}$. If $n\ge3$ and $p\in[n,\fz)$, then $\tilde{\mathcal{A}}^p_0(\boz)
=\{c\phi_\ast:\ c\in\rr\}$, where $\phi_\ast$ is the unique solution of the Dirichlet problem
\begin{equation*}
\lf\{\begin{array}{ll}
\Delta \phi_\ast=0\ \ &\text{in}\ \boz,\\
\phi_\ast=0\  &\text{on}\  \partial\boz,\\
\phi_\ast(x)\to1\  &\text{as}\ |x|\to\fz.\\
\end{array}\r.
\end{equation*}
If $n=2$ and $p\in(2,\fz)$, then $\tilde{\mathcal{A}}^p_0(\boz)=\{c\phi_\ast:\ c\in\rr\}$,
where $\phi_\ast$ is a harmonic function in $\boz$ satisfying that $\phi_\ast=0$ on $\partial\boz$ and
\begin{equation*}
\lf\{\begin{array}{ll}
\phi_\ast(x)=-c_0\ln|x|+O(|x|^{-1}),\\
\nabla\phi_\ast(x)=-c_0\nabla\ln|x|+O(|x|^{-2}),\\
\nabla^2\phi_\ast(x)=O(|x|^{-2}),\\
\end{array}\r.
\end{equation*}
as $|x|\to\fz$. Here $c_0$ is a constant and the notation $O(|x|^{-2})$ means that $\lim_{|x|\to\fz}\frac{|x|^{-2}}{O(|x|^{-2})}$
exists and is finite.
\end{itemize}
\end{proposition}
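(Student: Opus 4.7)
The overall plan is: part (i) reduces directly to Proposition \ref{p2.1} once $q(\mathcal{L})$ and $q(\mathcal{L}_{D,\boz_R})$ are shown to be infinite, and part (ii) further identifies the abstract generator $u_0-1$ (respectively $u_0-u_1$) with the geometric harmonic function $\phi_\ast$ prescribed by its boundary value and asymptotic behavior at infinity.

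For part (i), I would first choose $R$ so large that $\boz^c\subset B(0,R)$. Under either $A\in\mathrm{CMO}(\rn)$ or $A\in\mathrm{VMO}(\rn)$ with $(GD)$, Lemma \ref{l2.7} gives $q(\mathcal{L})=\fz$. Since $\partial\boz$ is $C^1$ and $\partial B(0,R)$ is smooth, $\partial\boz_R$ is also $C^1$, so Lemma \ref{l2.8} (with $\epz_0=\fz$) yields $q(\mathcal{L}_{D,\boz_R})=\fz$. Hence Proposition \ref{p2.1} applies to every $p\in[n,\fz)$ when $n\ge3$ and every $p\in(2,\fz)$ when $n=2$, and its conclusion gives precisely the description of $\tilde{\mathcal{A}}^p_0(\boz)$ claimed in (i).

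For part (ii) with $n\ge3$, part (i) (applied with $A=I_{n\times n}$) combined with Proposition \ref{p2.1} yields $\tilde{\mathcal{A}}^p_0(\boz)=\{c(u_0-1):c\in\rr\}$ where $u_0$ solves \eqref{e2.9}. I would set $\phi_\ast:=1-u_0$; then $\phi_\ast$ is harmonic in $\boz$ and vanishes on $\partial\boz$. The remaining point is the decay $u_0(x)\to 0$ as $|x|\to\fz$, which would then give $\phi_\ast(x)\to 1$ at infinity. This follows from $u_0\in\dot{W}^{1,2}(\boz)$, the Sobolev embedding $\dot{W}^{1,2}(\boz)\hookrightarrow L^{2n/(n-2)}(\boz)$ (valid for $n\ge3$), and mean-value estimates for harmonic functions propagating the $L^{2n/(n-2)}$ bound into pointwise decay. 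Uniqueness of $\phi_\ast$ then follows from the exterior maximum principle applied to the difference of two candidates, and the $C^{1,1}$ regularity of $\partial\boz$ delivers the required boundary smoothness.

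The case $n=2$ of (ii) is the main technical obstacle. The outline is analogous: by part (i), $\tilde{\mathcal{A}}^p_0(\boz)=\{c(u_0-u_1):c\in\rr\}$, and I would take the candidate $\phi_\ast:=u_0-u_1$ (possibly after adjusting by an additive constant), which is harmonic in $\boz$ and vanishes on $\partial\boz$. To produce the prescribed asymptotics, I would first expand the single-layer representation
\[
u_1(x)=-\frac{1}{2\pi|\partial\boz|}\int_{\partial\boz}\ln|x-y|\,d\sigma(y)
\]
by writing $\ln|x-y|=\ln|x|+O(|y|/|x|)$ uniformly for $y\in\partial\boz$ as $|x|\to\fz$, which gives $u_1(x)=-\frac{1}{2\pi}\ln|x|+O(|x|^{-1})$; differentiating under the integral delivers analogous expansions of $\nabla u_1$ and $\nabla^2 u_1$. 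For $u_0$, the finiteness of the Dirichlet energy (membership in $\dot{W}^{1,2}(\boz)$) combined with the harmonic expansion at infinity in an exterior planar domain rules out any $\ln|x|$ term and yields $u_0=c_\fz+O(|x|^{-1})$; $C^{1,1}$ regularity of the boundary together with interior Schauder estimates in annuli $\{R<|x|<2R\}$ controls $\nabla u_0$ and $\nabla^2 u_0$. Combining these two expansions, with $c_\fz$ absorbed into the choice of representative from the one-dimensional space $\tilde{\mathcal{A}}^p_0(\boz)$, produces $\phi_\ast$ with the prescribed form and $c_0=-\frac{1}{2\pi}$. The most delicate step is ruling out the logarithmic term in the expansion of $u_0$ using only the $\dot{W}^{1,2}$ seminorm, which requires a careful annular energy argument in $\rr^2$.
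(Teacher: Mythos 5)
Your part (i) is exactly the paper's argument: Lemma \ref{l2.7} gives $q(\mathcal{L})=\fz$, Lemma \ref{l2.8} gives $q(\mathcal{L}_{D,\boz_R})=\fz$ for the $C^1$ domain $\boz_R$, and Proposition \ref{p2.1} then yields the claim on the stated ranges of $p$. For part (ii), however, you diverge from the paper, which simply cites \cite[Theorem 2.7 and Remark 2.8]{agg97} and \cite[Remarks 5.3--5.5]{sw23} and omits all details; you instead sketch a self-contained identification of $u_0-1$ (resp.\ $u_0-u_1$) with $\phi_\ast$. Your $n\ge3$ argument (Sobolev embedding of $\dot{W}^{1,2}$ into $L^{2n/(n-2)}$, mean-value estimates to get $u_0\to0$, maximum principle for uniqueness) is sound and arguably more informative than the paper's citation.

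The $n=2$ case of (ii) contains a genuine gap at the final step. You correctly obtain $u_1(x)=-\frac{1}{2\pi}\ln|x|+O(|x|^{-1})$ (with vanishing constant term, since $\frac{1}{|\partial\boz|}\int_{\partial\boz}d\sigma=1$) and $u_0(x)=c_\fz+O(|x|^{-1})$ by the annular energy argument, but the phrase ``with $c_\fz$ absorbed into the choice of representative from the one-dimensional space'' is not a legitimate move: the representatives of $\tilde{\mathcal{A}}^p_0(\boz)$ are the scalar multiples $c(u_0-u_1)$, and rescaling cannot remove an additive constant, while shifting by a constant destroys the boundary condition $\phi_\ast=0$ on $\partial\boz$. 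Moreover $c_\fz$ is in general nonzero: for $\boz=\{|x|>R\}$ one computes $u_1=-\frac{1}{2\pi}\ln\max(|x|,R)$, hence $u_0\equiv-\frac{1}{2\pi}\ln R$ and $u_0-u_1=\frac{1}{2\pi}\ln(|x|/R)$, whose expansion carries the constant $-\frac{1}{2\pi}\ln R$. Thus your construction does not produce a function of the literal form $-c_0\ln|x|+O(|x|^{-1})$ unless that expansion is understood to include an additive constant (as it does in \cite{agg97}, where the Robin constant of $\partial\boz$ appears). You should either prove that the constant term vanishes under the specific normalization at hand (it does not in general) or state and prove the asymptotics with the extra constant; as written, the step fails.
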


\begin{proof}
If $A\in\mathrm{CMO}(\rn)$, or { $A\in \mathrm{VMO}(\rn)$ satisfies $(GD)$}, from Lemma \ref{l2.7}, we infer that
$q(\mathcal{L})=\fz$. Moreover, by Lemma \ref{l2.8}, it holds that $ q(\mathcal{L}_{D,\boz_R})=\infty.$
Therefore,   by Proposition \ref{p2.1} we find that (i) holds.

The conclusion of (ii) was obtained in \cite[Theorem 2.7 and Remark 2.8]{agg97}
(see also \cite[Remarks 5.3, 5.4 and 5.5]{sw23}) and we omit the details here.
This finishes the proof of Proposition \ref{p3.2}.
\end{proof}

We prove Theorem \ref{c1.2} by using Theorem \ref{t1.1} and Proposition \ref{p3.2}.

\begin{proof}[Proof of {Theorem \ref{c1.2}}]

(i) Assume that $A\in\mathrm{VMO}(\rn)$ satisfies $(GD)$, or $A\in \mathrm{CMO}(\rn)$. Let $2<p\in [n,\infty)$.
Let us show that $\mathcal{K}_p(\mathcal{L}_D^{1/2})$ coincides with $\tilde{\mathcal{A}}^p_0(\boz)$.

Take a large constant $R\in(0,\fz)$ such that
$\boz^c\subset B(0,R-1)$ and let $\boz_R:=\boz\cap B(0,R)$. Then $\boz_R$ is a bounded $C^1$ domain of $\rn$.
By Lemma \ref{l2.7}, we find that $q(\mathcal{L})=\fz$. Moreover, from Lemma \ref{l2.8}, we infer that
$q(\mathcal{L}_{D,\boz_R})=\infty$. Therefore, by Theorem \ref{t1.1} and Proposition \ref{p3.2},
it holds for any $f\in \dot{W}_0^{1,p}(\boz)$,
\begin{equation*}
\inf_{\phi\in\tilde{\mathcal{A}}^p_0(\boz)}\lf\|\nabla f-\nabla\phi\r\|_{L^p(\boz)}\le C\lf\|\mathcal{L}^{1/2}_D f\r\|_{L^p(\boz)}.
\end{equation*}
This implies that $\mathcal{K}_p(\mathcal{L}_D^{1/2})\subset \tilde{\mathcal{A}}^p_0(\boz)$.

Let us show the converse inclusion. Let $u\in \mathcal{A}^p_0(\boz)$. By Theorem \ref{app-dirichlet} (i) we see that
$\mathcal{L}_D^{1/2}u\in L^p(\Omega)$.
Denote by $\{p^D_t\}_{t>0}$ the heat kernels of the heat semigroup
$\{e^{-t\mathcal{L}_D}\}_{t>0}$. By \cite[Lemma 2.3]{cd99}, there exists $\gamma>0$ such that for all $t>0$,
 \begin{eqnarray*}
\int_{\Omega}\lf|\nabla_x p^D_t(x,y)\r|^{2}\exp\left\{\gz |x-y|^2/t\right\}\,dx\le Ct^{1-\frac n2},
 \end{eqnarray*}
which implies for $1<q<2$ that
\begin{eqnarray*}
\int_{\Omega}\lf|\nabla_x p^D_t(x,y)\r|^{q}\exp\left\{\gz |x-y|^2/(2t)\right\}\,dx\le Ct^{\frac q2-\frac n2}.
\end{eqnarray*}
 Thus $p_t^D(x,\cdot)\in {W}_0^{1,q}(\Omega)$ for $1\le q\le 2$, for all $t>0$.
Therefore, for each $t>0$, $\mathcal{L}_De^{-t\mathcal{L}_D}u$ satisfies that for all $x\in\boz$,
\begin{align*}
\mathcal{L}_De^{-t\mathcal{L}_D}u(x)&=\int_\Omega(\mathcal{L}_D)_xp_t^D(x,y)u(y)\,dy=\int_\Omega (\mathcal{L}_D)_yp_t^D(x,y)u(y)\,dy\\
&=-\int_\Omega A(y)\nabla_yp_t^D(x,y)\cdot\nabla u(y)\,dy=\int_\Omega p_t^D(x,y)\mathcal{L}_D u(y)\,dy=0,
\end{align*}
where the second equality  by symmetry of the heat kernel, the third equality by $u\in \dot{W}_0^{1,p}(\Omega)$
and $p_t^D(x,\cdot)\in {W}_0^{1,p'}(\Omega)$, $1/p+1/p'=1$, $p>n$ when $n=2$ and $p\ge n$ when $n\ge 3$. We thus see that
$$
\mathcal{L}_D^{1/2}u=\frac{1}{\sqrt{\pi}} \int_{0}^{\infty}\mathcal{L}_D e^{-s \mathcal{L}_D} u\frac{\,ds}{\sqrt{s}}
=\frac{1}{\sqrt{\pi}} \int_{0}^{\infty} e^{-s \mathcal{L}_D} \mathcal{L}_D u\frac{\,ds}{\sqrt{s}}=0,
$$
which implies that $\tilde{\mathcal{A}}^p_0(\boz)\subset \mathcal{K}_p(\mathcal{L}_D^{1/2})$.

(ii) By (i) and Theorem \ref{t1.1}, we see that
for all
$f\in \dot{W}_0^{1,p}(\boz)$ that
\begin{equation*}
\inf_{\phi\in \mathcal{K}_p(\mathcal{L}_D^{1/2})}\lf\|\nabla f-\nabla\phi\r\|_{L^p(\boz)}\le C\lf\|\mathcal{L}^{1/2}_D f\r\|_{L^p(\boz)}.
\end{equation*}
This further implies that for all $f\in \dot{W}_0^{1,p}(\boz)$,
\begin{equation*}
\inf_{\phi\in \mathcal{K}_p(\mathcal{L}_D^{1/2})}\lf\|\nabla f-\nabla\phi\r\|_{L^p(\boz)}\le C\lf\|\mathcal{L}^{1/2}_D f\r\|_{L^p(\boz)}=\inf_{\phi\in \mathcal{K}_p(\mathcal{L}_D^{1/2})} C\lf\|\mathcal{L}^{1/2}_D (f-\phi)\r\|_{L^p(\boz)}.
\end{equation*}
By Theorem \ref{app-dirichlet}(i), it holds  for all
$f\in \dot{W}_0^{1,p}(\boz)$ and $\phi\in \mathcal{K}_p(\mathcal{L}_D^{1/2})$ that
\begin{equation*}
\lf\|\mathcal{L}^{1/2}_D f\r\|_{L^p(\boz)}=\lf\|\mathcal{L}^{1/2}_D (f-\phi)\r\|_{L^p(\boz)}\le
C\lf\|\nabla (f-\phi)\r\|_{L^p(\boz)}.
\end{equation*}
The last two inequalities give the desired conclusion and complete the proof.
\end{proof}

\smallskip

\noindent{\bf Acknowledgements.}\quad
The second author would like to thank Professor Zhongwei Shen
for some helpful discussions on the topic of this paper.

\bigskip

\noindent Renjin Jiang

\medskip

\noindent Academy for Multidisciplinary Studies, Capital Normal University, Beijing 100048,
People's Republic of China

\smallskip

\noindent {\it E-mail}: \texttt{rejiang@cnu.edu.cn} (R. Jiang)

\bigskip

\noindent Sibei Yang
\medskip

\noindent School of Mathematics and Statistics, Gansu Key Laboratory of Applied Mathematics
and Complex Systems, Lanzhou University, Lanzhou 730000, People's Republic of China

\smallskip

\smallskip

\noindent {\it E-mail}: \texttt{yangsb@lzu.edu.cn} (S. Yang)

\hspace{0.888cm}

\end{document}